\newcommand{\real}{{\mathbb R}}
\newcommand{\posint}{{\mathbb Z}_+}
\newcommand{\integ}{{\mathbb Z}}
\newcommand{\nat}{{\mathbb N}}
\newcommand{\rat}{{\mathbb Q}}
\newcommand{\sF}{{\mathcal F}}
\newcommand{\sL}{{\mathcal L}}
\newcommand{\sS}{{\mathcal S}}
\newcommand{\sA}{{\mathcal A}}
\newcommand{\scP}{{\mathbb P}}
\title{Moment growth bounds on continuous time Markov processes on non-negative integer lattices
} 
\author{Muruhan Rathinam\thanks{ Mathematics and Statistics, University of
Maryland Baltimore County, 1000 Hilltop Circle, Baltimore, MD 21250 ({\tt muruhan@umbc.edu}, Ph 410-455-2423, Fax 410-455-1066. The research of this author was
supported by grant NSF DMS-0610013.} }
\begin{document}

\maketitle
\date{}

\begin{abstract}
We consider time homogeneous Markov processes in continuous time with state space $\posint^N$ and provide two sufficient conditions and one necessary condition for the existence of moments $E(\|X(t)\|^r)$ of all orders $r \in \nat$ for all $t \geq 0$. 
The sufficient conditions also guarantee an exponential in time growth bound for the moments.   
The class of processes studied have finitely many state independent 
jumpsize vectors $\nu_1,\dots,\nu_M$. This class of processes arise 
naturally in many applications such as stochastic models of chemical kinetics, 
population dynamics and epidemiology for example. We also provide a necessary and sufficient condition for stochiometric boundedness of species in terms of $\nu_j$.  
\end{abstract}

\begin{keywords} 
Continuous time jump Markov process, moment growth bounds, stochastic chemical kinetics, stochastic population dynamics. 
\end{keywords}

\begin{AMS}
60J27.
\end{AMS}

\thispagestyle{plain}

\section{Introduction}
Time homogeneous Markov processes in continuous time with the non-negative integer lattice 
as state space arise in stochastic models of chemical kinetics, 
predator-prey systems, and epidemiology etc. While the primary 
focus of this paper shall be the Markov processes describing stochastic 
chemical kinetics, the results derived will be of use in other applications 
where the processes have similar structure.  More specifically any time homogeneous Markov 
process model that evolves in continuous time on the 
state space $\posint^N$ and has finitely many 
types of jump events with fixed (state and time independent) jump sizes $\nu_1,\dots,\nu_M$ will be the subject of study in this paper.  

A stochastic chemical system with $N \in \nat$ species and $M \in \nat$ reaction channels is described by a Markov process $X(t)$ in continuous time $t \geq 0$ with state space $\posint^N$. The $i$th component $X_i(t)$ describes the (random) number
of species at time $t$.  The probability law of the process is uniquely 
characterized by the {\em stoichiometric matrix} $\nu$ which is 
$N \times M$ with integer entries and the {\em propensity function} 
$a:\posint^N \to \real_+^M$. The functions $a_j:\posint^N \to \real_+$ are also known as {\em intensity functions} or {\em rate functions}. We shall use the 
term propensity which is used in chemical kinetics. The function $a_j(x)$ 
describes the ``probabilistic rate'' at which reaction $j$ occurs while in 
state $x$.  More precisely, given $X(t)=x$, the probability that reaction $j$ 
occurs during $(t,t+h]$ is given by $a_j(x) h + o(h)$ as $h \to 0+$. 
Column vectors of $\nu$ are denoted by $\nu_j$ for $j=1,\dots,M$, and $\nu_j$ 
describes the change of state due to one occurrence of reaction $j$.  
See \cite{Van-book,Gillespie1977} for general introduction to stochastic models in chemical kinetics.

As an example consider the system with $N=2$ species $S_1, S_2$ and $M=2$ reactions given by  
\[
S_1 \to S_2, \quad S_2 \to S_1.
\]
Here the first reaction is one where one $S_1$ is converted into one $S_2$ and 
the second reaction is precisely the reversal of the first. The stoichiometric 
vectors are given by $\nu_1 = (-1,1)^T$ and $\nu_2 = (1,-1)^T$. In the 
standard model of chemical kinetics the propensity functions for this example 
are given by $a_1(x) = c_1 x_1$ and $a_2(x) = c_2 x_2$ and in general 
the propensity functions are derived from combinatorial considerations and 
hence are polynomials\cite{Gillespie1977}. In this paper however we allow more general form for the propensities as we do not want to limit ourselves to models 
arising in chemical kinetics.   
 
The time evolution of the probabilities $p(t;x) = \text{Prob}(X(t)=x)$ is 
governed by the Kolomogorov's forward equations 
\begin{equation}
\frac{d}{dt} p(t;x) = \sum_{j=1}^M [a_j(x-\nu_j) p(t;x-\nu_j) - a_j(x) p(t;x)],
\label{eq_Kol}
\end{equation}
where the functions $a_j$ are understood to be zero if $x-\nu_j \notin \posint^N$, and this is typically an infinite system of ODEs indexed by $x \in \posint^N$. 
While the initial condition in general may be an arbitrary initial distribution $p(0;x)$ on $\posint^N$, it is adequate to study the case of deterministic initial conditions, i.e.\ $p(0;x) = \delta_{x_0}(x)$ in order to make conclusions about the general case. 

In many practical examples, the system is bound to stay in a finite 
subset of $\posint^N$ which is determined by the initial state $x_0$. 
In the above example $S_1 \to S_2, S_2 \to S_1$, it is clear that the total 
number of species $X_1(t)+X_2(t)$ is conserved for all time $t \geq 0$. As a 
result the system shall remain in a finite subset of $\posint^N$. While such 
conservation laws and the consequent boundedness of the system 
are easy to spot for small systems, it may be difficult to decide for 
a large system. In this paper we develop a systematic theory of boundedness 
of species and provide necessary and sufficient conditions based on results from the study of convex cones in finite dimensions. These conditions are expressed in terms of the solution of linear inequalities which can be formulated as a linear programming problem for which several solution techniques exist \cite{Vanderbei-LP-Book}.

While \eqref{eq_Kol}  forms a linear system of equations, analytical, or even numerical computations of $p(t;x)$ is often unwieldy even for bounded systems.  
In applications it is often of interest to know the moments $E(\|X(t)\|^r)$ for 
$r \in \nat$ where $\|.\|$ is some norm on $\real^N$. 
When the propensity functions are linear (or affine), it is possible to derive 
evolution equations for the moments which are closed. However, for nonlinear propensities it is not straightforward to even to decide if the system has finite moments let alone compute those.  

The time evolution of expected value of some function $h$ of the state, $E(h(X(t)))$, satisfies the so called Dynkin's formula
\[
\frac{d}{dt} E(h(X(t))) = \sum_{j=1}^M E\left[ \left(h(X(t)+\nu_j)-h(X(t)) \right) a_j(X(t)) \right].
\]
While it is tempting to use $h(x)=\|x\|^r$ to derive the time evolution of 
the $r$th moment $E(\|X(t)\|^r)$, care must be taken as the above equation 
may not hold for unbounded functions $h$. 
In this paper we derive with care some sufficient conditions for the 
moments $E(\|X(t)\|^r)$ to exist for all $r \in \nat$ and satisfy an exponential (in time) growth bound. We also provide a necessary condition for 
moments $E(\|X(t)\|^r)$ to exist for all $r$ and all $t \geq 0$. 

A set of sufficient conditions under which a large class of queueing networks 
(which are time inhomegeneous Markov processes on $\posint^N$) have moments 
converging asymptotically as $t \to \infty$ is obtained in 
\cite{Dai-Meyn-IEEEAUT95}. A recent work \cite{Briat-Gupta+Arxiv13} obtains 
a set of sufficient conditions under which $\sup_{t \geq 0} E(\|X(t)\|^r) < \infty$. The results obtained in this paper are for existence of moments for all finite $t \geq 0$ without requiring that $\sup_{t \geq 0} E(\|X(t)\|^r) < \infty$. This allows for systems which experience exponential growth (in time). 
Some sufficient conditions for the existence of moments for all finite $t \geq 0$ in the form of one-sided Lipschitz condition may be found for stochastic differential equations (SDEs) driven by Brownian motion in 
\cite{Higham-Mao+_SINUM02,Mao-Book-SDE98}. The class of processes studied in this paper are of a different form and consequently our results are of a different flavor.  

The rest of the paper is organized as follows. In Section \ref{sec-bdd} we 
develop some mathematical preliminaries and provide necessary and sufficient 
conditions for what we call the stoichiometric boundedness of species. 
The analysis in this section is purely deterministic. 
In Section \ref{sec-moms} we provide three main results, two sufficient conditions and a necessary condition for the existence of all moments for all time $t \geq 0$. We illustrate our results via examples where appropriate. 

\section{Preliminaries and boundedness of species}\label{sec-bdd}
A {\em chemical system} or a {\em system} is characterized by a {\em stoichiometric matrix} $\nu \in \integ^{N \times M}$ and a {\em propensity function} $a:\posint^N \to \real_+^M$. When necessary the propensity function may be extended to the domain $\integ^N$ to be zero outside $\posint^N$.  
Associated to a chemical system and an initial condition $x \in \posint^N$ is a Markov process $X(t)$ in continuous time with $X(0)=x$ (with probability $1$) 
as described in the introduction. We shall assume the process $X$ to have paths that are continuous from the right with left hand limits. 
 We assume that the process $X$ is carried by a probability space $(\Omega,\sF,\text{Prob})$.

We shall say that a propensity function is {\em proper} if it satisfies the condition that for all $x \in \posint^N$ if $x + \nu_j \notin \posint^N$ then $a_j(x)=0$. We note that properness is necessary and sufficient to ensure that the process $X$ remains in $\posint^N$ when started in $\posint^N$. 
We shall say that 
the propensity function is {\em regular} if it satisfies the condition that for all $x \in \posint^N$, and all $j=1,\dots,M$,  $a_j(x)=0$ if and only if $x + \nu_j \notin \posint^N$. We observe that regularity implies properness. 
Throughout the rest of the paper we shall assume properness. When regularity is  assumed, it will be stated explicitly.

Consider a system with $N$ species reacting through $M$ reaction channels. 
%Fix the initial state of the system to be deterministic $X(0)=x_0$. 
We define the {\em accessible set} of states $\sA_{x} \subset \posint^N$ given an initial state $x \in \posint^N$ 
by the condition that $y \in \sA_{x}$ if and only if there exists $t>0$ such that
\[
\text{Prob}(X(t)=y \, | \, X(0)=x) > 0.
\]
We observe that from standard Markov chain theory \cite{Bremaud-MarkovChains} the 
above definition is unchanged if the phrase ``there exists $t>0$''  is replaced by 
``for every $t>0$''. 
Further more $y \in \sA_x$ if and only if there exists a finite sequence 
$(j_1,j_2,\dots,j_n)$ of indices which take values in $\{1,\dots,M\}$ such 
that  for $y^{(l)}$  where $l=0,1,\dots,n$ defined  by
\[
y^{(l+1)} = y^{(l)} + \nu_l, \quad l=0,1,\dots,n-1,
\]
with $y^{(0)}=x$ it holds that 
\[
a_{j_l}(y^{(l-1)}) > 0,  \quad l =1,\dots,n.
\] 

It is convenient to define the {\em stoichiometricaly accessible set} 
$\sS_{x} \subset \posint^N$ given an initial state $x \in \posint^N$ by 
\begin{equation}\label{def_Sx}
\begin{aligned}
\sS_x &= \{ y \in \real^N \, | \,\exists v \in \posint^M \text{ such that } y = x + \nu v \} \cap \real_+^N \\
&= \{ y \in \posint^N \, | \,\exists v \in \posint^M \text{ such that } y = x+ \nu v \}.
\end{aligned}
\end{equation} 
(The second equality follows logically). 

It is clear that $\sA_{x} \subset \sS_{x}$. However these sets are not 
always equal as seen from Example 1.

{\bf Example 1:} Consider a system with $N=M=2$, $\nu_1=(3,-2)^T$ and 
$\nu_2=(-2,3)^T$. Consider the initial state $x=(1,1)^T$. 
Under the assumption of proper propensity function, at the initial state, 
propensities of both reactions are zero since the firing of either of the reactions will lead to a state with negative components. Thus $\sA_{x} = \{x\}$. 
However $\sS_{x}$ contains an infinite number of elements as choosing $k=(n,n)^T$ where $n$ is a positive integer 
results in $y = x + \nu k = (1+n,1+n)^T$ which are all in $\sS_{x}$ by definition.  

For $i=1,\dots,N$ let $\pi_i:\real^N \to \real$ be the standard projection onto the $i$th coordinate. Then if $\pi_i(\sA_{x})$ is bounded above we may conclude that 
species $i$ is bounded for initial condition $x$. Deciding whether $\pi_i(\sA_{x})$ is bounded above is harder than deciding whether $\pi_i(\sS_{x})$ 
is bounded above. So we shall focus on the latter first. We shall
use the terminology that species $i$ is {\em stoichiometricaly bounded} for the 
initial condition $x \in \posint^N$ provided $\pi_i(\sS_{x})$ is bounded above. As we shall see it turns out that stoichiometric boundedness is independent of the initial state $x$ and hence we could drop the reference to initial state when talking about stoichiometric boundedness of a species. 

In order to study the sets $\sS_x$ 
it is instructive to consider the related sets $C_x$ and $C_x^+$ defined as follows. Given $x \in \real^N$ 
we define $C_x$ and $C_x^+$ as follows:
\begin{equation}\label{def_Cx}
C_x = \{ y \in \real^N \, | \, \exists v \in \real_+^M \text{ such that } y = x + \nu v \},
\end{equation}
\begin{equation}\label{def_Cx+}
C_x^+ = C_x \cap \real_+^N.
\end{equation}
We note that $C_x$ is a {\em closed convex cone} with vertex $x$ and $C_x^+$ 
is a {\em closed convex set}.

\begin{lemma}\label{lem_integral}
Let $A \in \real^{k \times n}$ and $B \in \integ^{k \times n}$. 
Suppose there exists $v \in \real_+^n$ 
such that $A v > 0$ and $B v =0$. Then there exists $w \in \posint^n$  
 such that $A w > 0$ and $B w =0$.
\end{lemma}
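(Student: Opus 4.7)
The plan is to reduce the problem to producing a rational non-negative solution and then clear denominators by multiplying by a suitable positive integer. The existence of the rational solution will come from a density argument inside an appropriate rational linear subspace carved out by the support of $v$.

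First I would let $J = \{j \in \{1,\dots,n\} : v_j > 0\}$ denote the support of the given real solution $v$ and introduce the linear subspace
\[
L = \{u \in \real^n : Bu = 0,\ u_j = 0 \text{ for all } j \notin J\}.
\]
Because $B$ has integer (hence rational) entries, $L$ is the solution set of a rational homogeneous linear system, so Gaussian elimination produces a basis of $L$ consisting of rational vectors. In particular $L \cap \rat^n$ is dense in $L$ in the subspace topology. By construction $v \in L$, and the set
\[
U = \{u \in L : u_j > 0 \text{ for all } j \in J,\ Au > 0\}
\]
is open in $L$ (being defined by finitely many strict linear inequalities) and contains $v$. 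Density of $L \cap \rat^n$ in $L$ then yields a rational point $v' \in U$.

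To finish, I would verify that $v'$ has all the required properties and then clear denominators. Membership in $L$ forces $v'_j = 0$ for $j \notin J$, while membership in $U$ forces $v'_j > 0$ for $j \in J$, so $v' \in \rat_+^n$ with $Bv' = 0$ and $Av' > 0$. Choosing a positive integer $N$ that is a common multiple of the denominators of the entries of $v'$, the vector $w = Nv'$ lies in $\posint^n$, and scaling by $N>0$ preserves both $Bw = 0$ and $Aw > 0$. The main thing to be careful about is the density claim, i.e.\ that the kernel of an integer matrix, intersected with the coordinate hyperplanes $\{u_j = 0 : j \notin J\}$, remains a rational subspace; this is routine linear algebra over $\rat$, so no convex-analytic machinery such as Minkowski--Weyl or Gordan's theorem is needed.
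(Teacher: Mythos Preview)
Your argument is correct and follows the same route as the paper: exploit that $\ker(B)$ is a rational subspace so that rational points are dense, locate a nearby rational non-negative solution, then scale by a positive integer to clear denominators. Your explicit restriction to the support $J$ of $v$ is a nice touch that makes the relative-openness claim precise; the paper's proof is terser and asserts that $P=\{u\in\real_+^n:Au>0,\ Bu=0\}$ is relatively open in $\ker(B)$, which is slightly imprecise when $v$ lies on the boundary of $\real_+^n$, so your treatment of this point is in fact cleaner.
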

\begin{proof}
Define
\[
P=\{u \in \real_+^n \, | \, A u >0 , \, B u = 0\}.
\] 
Note that $P$ is nonempty, a cone with vertex $0$ and is 
relatively open in $\ker(B)$. Since $B$ has integer entrees and $\ker(B) \cap \real_+^n$ is nonempty, it follows that $\ker(B) \cap \rat_+^n$ is nonempty. 
As a relatively open set in $\ker(B) \cap \real_+^n$, the set $P$ contains elements from $\rat_+^n$. Since $P$ is a cone with vertex $0$, by taking 
a suitable positive integer multiple we can conclude $P$ contains elements from $\posint^n$. 
\end{proof}
 
\begin{corollary}\label{corr-integral}
Let $A \in \real^{k \times n}$ and $B \in \integ^{k \times n}$. 
Suppose there exists $v \in \real_+^n$ 
such that $A v > 0$ and $B v \geq 0$. Then there exists $w \in \posint^n$  
 such that $A w > 0$ and $B w \geq 0$.
\end{corollary}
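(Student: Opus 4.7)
The natural plan is to reduce the inequality constraint $Bv \geq 0$ to an equality constraint by introducing slack variables, so that Lemma \ref{lem_integral} applies directly.

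Specifically, given $v \in \real_+^n$ with $Av > 0$ and $Bv \geq 0$, set $s := Bv \in \real_+^k$, and augment the problem to $n+k$ variables. Define
\[
\tilde A := \begin{bmatrix} A & 0_{k \times k} \end{bmatrix} \in \real^{k \times (n+k)}, \qquad
\tilde B := \begin{bmatrix} B & -I_k \end{bmatrix} \in \integ^{k \times (n+k)},
\]
and the augmented vector $\tilde v := (v^T, s^T)^T \in \real_+^{n+k}$. A direct check gives $\tilde A \tilde v = Av > 0$ and $\tilde B \tilde v = Bv - s = 0$, so $\tilde v$ satisfies the hypothesis of Lemma \ref{lem_integral} for $(\tilde A, \tilde B)$.

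Applying Lemma \ref{lem_integral} produces $\tilde w \in \posint^{n+k}$ with $\tilde A \tilde w > 0$ and $\tilde B \tilde w = 0$. Writing $\tilde w = (w^T, t^T)^T$ with $w \in \posint^n$ and $t \in \posint^k$, these conditions translate exactly to $Aw > 0$ and $Bw = t \geq 0$, completing the proof.

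There isn't really a hard step here: the only thing to verify is that the augmented matrix $\tilde B$ still has integer entries (which is immediate since $-I_k$ is integer and $B$ was assumed integer), and that $\tilde w$ has nonnegative integer components in both blocks — in particular $t \in \posint^k$ gives $Bw \geq 0$ automatically. So the corollary is essentially a one-line slack-variable reformulation of the lemma.
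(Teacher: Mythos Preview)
Your argument is correct. The paper's own proof is simply the one-line assertion ``Follows from Lemma~\ref{lem_integral},'' and your slack-variable reduction is exactly the natural way to supply the omitted details; there is nothing to add.
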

\begin{proof}
Follows from Lemma \ref{lem_integral}.
\end{proof}

\begin{lemma}\label{lem_Cxbdd}
Let $1 \leq i \leq N$ and $x \in \real_+^N$. Then $\pi_i(C_x^+)$ is bounded above if and only if for every $z \in C_x^+$ if $z_i > x_i$ then $z-x$ has 
at least one negative component.
\end{lemma}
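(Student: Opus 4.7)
The plan is to prove both implications by contrapositive. The forward direction (necessity of the sign condition) is essentially a single observation: if $z \in C_x^+$ satisfies $z_i > x_i$ and $z - x \geq 0$ componentwise, write $z = x + \nu v$ for some $v \in \real_+^M$. The ray $\{x + \lambda \nu v : \lambda \geq 0\}$ lies in $C_x$ by definition, and because $x \geq 0$ and $\nu v = z - x \geq 0$ it stays in $\real_+^N$ as well, hence in $C_x^+$. Its $i$th coordinate $x_i + \lambda(z_i - x_i)$ diverges as $\lambda \to \infty$, so $\pi_i(C_x^+)$ is unbounded above. I would dispose of this direction in a single short paragraph.

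The reverse direction (sufficiency) is the substantive one. Assuming $\pi_i(C_x^+)$ is unbounded above, the goal is to exhibit $z \in C_x^+$ with $z_i > x_i$ and $z - x \geq 0$. The key structural observation I would exploit is that $C_x^+$ is a polyhedron: $C_x = x + \nu \real_+^M$ is the Minkowski sum of a point and a finitely generated convex cone, hence polyhedral by the Minkowski--Weyl theorem, and intersecting with the polyhedron $\real_+^N$ preserves polyhedrality. I would then invoke the standard decomposition $C_x^+ = \mathrm{conv}(V) + \mathrm{cone}(D)$ for finite sets $V, D \subset \real^N$. The $i$th coordinate of a generic point takes the form $\sum_j \lambda_j (v_j)_i + \sum_k \mu_k (d_k)_i$ with $\lambda_j, \mu_k \geq 0$ and $\sum_j \lambda_j = 1$; for this expression to be unbounded above, at least one recession direction $d \in D$ must have $d_i > 0$. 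Such a $d$ lies in the recession cone of $C_x^+$, which equals $\nu \real_+^M \cap \real_+^N$, so $d \geq 0$ and $d = \nu v$ for some $v \in \real_+^M$. Setting $z := x + d$ then yields $z \in C_x^+$ with $z - x = d \geq 0$ and $z_i - x_i = d_i > 0$, contradicting the hypothesis and finishing the contrapositive.

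The main obstacle, to the extent there is one, is extracting a recession direction with strictly positive $i$th coordinate from the mere unboundedness of the projection. This step genuinely needs polyhedrality: for an arbitrary closed convex set it can fail — e.g.\ $\{(s,t) \in \real^2 : t \geq s^2\}$ has unbounded first projection but only vertical recession directions. Thus the crux of the argument is the identification of $C_x^+$ as a polyhedron via Minkowski--Weyl; once that is in hand the rest is bookkeeping. Note that no integrality is needed here, so the earlier Lemma~\ref{lem_integral} and Corollary~\ref{corr-integral} play no role in this proof and are presumably reserved for transferring results from $C_x^+$ to $\sS_x$ in a subsequent step.
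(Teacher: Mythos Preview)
Your proof is correct. For the ``only if'' direction you argue by contrapositive exactly as the paper does: a point $z\in C_x^+$ with $z_i>x_i$ and $z-x\geq 0$ yields an unbounded ray in $C_x^+$. For the ``if'' direction your route is genuinely different. You exploit the polyhedrality of $C_x^+$, invoke the Minkowski--Weyl decomposition $C_x^+=\mathrm{conv}(V)+\mathrm{cone}(D)$, and read off from unboundedness of $\pi_i$ a recession direction $d$ with $d_i>0$; since $\mathrm{rec}(C_x^+)=\nu\real_+^M\cap\real_+^N$ you get $d\geq 0$, and $z:=x+d$ is the desired witness. The paper instead argues directly and geometrically: it foliates $C_x^+$ by half-lines $L_z$ emanating from $x$, notes that under the hypothesis each $L_z$ has bounded $i$th projection, defines $f_i^x(z)=\max\pi_i(L_z)$, and then uses continuity of $f_i^x$ together with compactness of $\{y:\|y-x\|=1\}\cap C_x^+$ to extract a uniform bound. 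Your argument is cleaner and more structural, resting on standard polyhedral machinery; the paper's is more self-contained (no Minkowski--Weyl) and uses only the cone property of $C_x$ plus compactness, so it would extend to non-finitely-generated cones, though that generality is not needed here. Your closing remark is also on target: integrality plays no role in this lemma, and Lemma~\ref{lem_integral} and Corollary~\ref{corr-integral} are indeed reserved for the subsequent passage from $C_x^+$ to $\sS_x$.
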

\begin{proof}
{\em only if:}  Let $z \in  C_x^+$ and suppose $z_i>x_i$. If for all $\lambda>0$,
 the vector $x + \lambda(z-x)$ has no negative components then it would imply that $\pi_i(C_x^+)$ is unbounded. 
Thus there exists $\lambda>0$ such that $x + \lambda(z-x)$ has 
at least one negative component. This implies that $z-x$ has at least one negative component.

{\em if:} 
For any $z \in C_x^+ \setminus\{x\}$ define $L_z$ by  
\[
L_z = \{ y \in \real_+^N | \, \exists \lambda \geq 0 \text{ such that } y = x + \lambda (z-x) \},
\]
which will be a closed line segment (may be infinite). By assumption the set 
$\pi_i(L_z)$ is bounded above. To see this if $z_i \leq x_i$ then $\pi_i(L_z)$ is bounded above by $x_i$. If $z_i>x_i$ then $L_z$ is a finite segment since 
$z-x$ has at least one negative component. Since $C_x^+$ may be partitioned into sets of the form $L_z$, what is left to be shown is that there exists a common upper bound $M>0$ such that $\pi_i(L_z)$ is bounded above by $M$ for all $z \in C_x^+$. 

To see this define $f^x_i:C_x^+ \setminus\{x\} \to \real$ by  
\[
f^x_i(z) = \max\{y_i \, | y \in L_z \} = \max(\pi_i(L_z)),
\]
which is well defined. 
It is not difficult to show $f^x_i$ is continuous on $C_x^+ \setminus\{x\}$ 
and constant on the sets $L_z$, $z \in C_x^+ \setminus \{x\}$. Hence on the compact set 
\[
\{y \in \real^N \, |\, \|y-x\|=1\} \cap C_x^+
\]
$f^x_i$  attains a maximum value say $M$. It follows that $\pi_i(C_x^+)$ has maximum value $M$.
\end{proof}

\begin{corollary}\label{corr_Sbdd}
Let $1 \leq i \leq N$ and $x \in \posint^N$. Then $\pi_i(\sS_x)$ is bounded above if and only if for every $z \in \sS_x$ if $z_i > x_i$ then $z-x$ has 
at least one negative component.
\end{corollary}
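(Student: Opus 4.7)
The strategy is to reduce the corollary to Lemma~\ref{lem_Cxbdd} via the inclusion $\sS_x \subset C_x^+$, invoking Corollary~\ref{corr-integral} whenever a real-coefficient witness in $C_x^+$ must be converted into an integer-coefficient one in $\sS_x$. I will treat the two implications separately.

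For the \emph{only if} direction, assume $\pi_i(\sS_x)$ is bounded above and suppose, for contradiction, that some $z \in \sS_x$ satisfies $z_i > x_i$ while $z - x$ has no negative components. Since $z, x \in \posint^N$, we have $z - x \in \posint^N$, and by definition of $\sS_x$ there exists $v \in \posint^M$ with $z - x = \nu v$. Then for each positive integer $k$, the point $x + \nu(k v) = x + k(z - x)$ lies in $\posint^N$ and hence in $\sS_x$, yet its $i$th coordinate $x_i + k(z_i - x_i)$ grows without bound, contradicting the hypothesis.

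For the \emph{if} direction, I argue that the geometric hypothesis on $\sS_x$ forces the analogous condition on $C_x^+$; Lemma~\ref{lem_Cxbdd} and the inclusion $\sS_x \subset C_x^+$ then finish the job. Suppose some $z \in C_x^+$ has $z_i > x_i$ and $z - x \geq 0$ coordinatewise, with $z - x = \nu v$ for some $v \in \real_+^M$. Apply Corollary~\ref{corr-integral} with $A$ equal to the $i$th row of $\nu$ and $B$ equal to the remaining $N - 1$ rows of $\nu$: the hypotheses $A v > 0$ and $B v \geq 0$ hold by construction, so we obtain $w \in \posint^M$ with $(\nu w)_i > 0$ and the remaining coordinates of $\nu w$ nonnegative. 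The point $z' := x + \nu w$ then lies in $\posint^N$ and hence in $\sS_x$, satisfies $z'_i > x_i$, and has $z' - x = \nu w$ with no negative components, contradicting the hypothesis on $\sS_x$.

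The only step that requires any thought is the integrality passage in the \emph{if} direction: one must split the rows of $\nu$ into the strict part $A$ (the $i$th row) and the non-strict part $B$ (the remaining rows) in order to apply Corollary~\ref{corr-integral}. Once that identification is made, everything else is direct bookkeeping.
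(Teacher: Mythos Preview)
Your proof is correct. The \emph{if} direction is essentially identical to the paper's: both pass from a real witness in $C_x^+$ to an integer witness in $\sS_x$ via Corollary~\ref{corr-integral} and then invoke Lemma~\ref{lem_Cxbdd}. (A cosmetic difference: the paper takes $B=\nu$ rather than the $N-1$ non-$i$th rows, which is harmless since the $i$th-row inequality is already covered by $A v>0$.)

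The \emph{only if} direction genuinely differs. The paper argues that boundedness of $\pi_i(\sS_x)$ forces boundedness of $\pi_i(C_x^+)$ and then quotes Lemma~\ref{lem_Cxbdd}; but the implication $\pi_i(\sS_x)$ bounded $\Rightarrow$ $\pi_i(C_x^+)$ bounded is itself nontrivial (it is the content of Lemma~\ref{lem_SbddCbdd}, stated \emph{after} this corollary), so the paper's argument here is terse to the point of being a forward reference. Your approach avoids this entirely: given $z\in\sS_x$ with $z_i>x_i$ and $z-x\ge 0$, you simply iterate the integer witness, noting that $x+k(z-x)\in\sS_x$ for all $k\in\nat$ with $i$th coordinate tending to infinity. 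This is more elementary and logically self-contained, and it does not rely on any connection to $C_x^+$ at all.
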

\begin{proof}
{\em only if:} If $\pi_i(\sS_x)$ is bounded above then so is $\pi_i(C^+_x)$ and by Lemma \ref{lem_Cxbdd} the result follows. 

{\em if:} Suppose $\pi_i(\sS_x)$ is unbounded above. Then so is $\pi_i(C^+_x)$ and by Lemma \ref{lem_Cxbdd} there exists $y \in C_x^+$ such that $y_i>x_i$ and $y \geq x$. Hence there exists $v \in \real_+^M$ such that $\mu v >0$ and $\nu v \geq 0$ where $\mu$ is the $i$th row of $\nu$. By Corollary \ref{corr-integral} there exists $w \in \posint^M$ such that $\mu w >0$ and $\nu w \geq 0$. Taking $z = x + \nu w$ show that there exists $z \in \sS_x$ such that $z_i>x_i$ and $z \geq x$. 
\end{proof}

\begin{lemma}\label{lem_C0bddCxbdd}
Let $1 \leq i \leq N$ and $x \in \real_+^N$. Then $\pi_i(C_x^+)$ is bounded above if and only if $\pi_i(C_0^+)$ is bounded above. 
\end{lemma}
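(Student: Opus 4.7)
The plan is to prove the equivalence by reducing everything to Lemma~\ref{lem_Cxbdd}, exploiting the translation relation $C_x = x + C_0$ which follows immediately from the definition~\eqref{def_Cx}.

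First I would record the following cone observation: $C_0^+$ is a convex cone with vertex $0$ contained in $\real_+^N$, so $\pi_i(C_0^+)$ is a cone in $\real_+$. Hence $\pi_i(C_0^+)$ is bounded above if and only if $\pi_i(C_0^+) = \{0\}$, i.e.\ every $y \in \real_+^M$ with $\nu y \geq 0$ satisfies $(\nu y)_i = 0$. This reformulation turns the lemma into: $\pi_i(C_x^+)$ is bounded above iff no nonnegative conic combination of the columns of $\nu$ produces a nonnegative vector with strictly positive $i$th entry.

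For the ``only if'' direction I would argue by contrapositive. Suppose $\pi_i(C_0^+) \neq \{0\}$, so there exists $y = \nu v \in C_0^+$ with $v \in \real_+^M$, $y \geq 0$, and $y_i > 0$. Since $x \in \real_+^N$, for every $\lambda \geq 0$ the vector $x + \lambda y = x + \nu(\lambda v)$ lies in $C_x$ and has all entries nonnegative, hence lies in $C_x^+$. Its $i$th coordinate is $x_i + \lambda y_i$, which is unbounded in $\lambda$, so $\pi_i(C_x^+)$ is unbounded above.

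For the ``if'' direction I would use Lemma~\ref{lem_Cxbdd}. Assume $\pi_i(C_0^+) = \{0\}$ and suppose toward contradiction that $\pi_i(C_x^+)$ is unbounded above. By Lemma~\ref{lem_Cxbdd} there exists $z \in C_x^+$ with $z_i > x_i$ and $z - x \geq 0$ (i.e.\ no negative component). Writing $z = x + \nu v$ for some $v \in \real_+^M$, we get $z - x = \nu v \in C_0$ and $z - x \in \real_+^N$, so $z - x \in C_0^+$. But $(z-x)_i = z_i - x_i > 0$ contradicts $\pi_i(C_0^+) = \{0\}$.

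There is no real obstacle here; the whole argument rests on the translation identity $C_x = x + C_0$ together with the prior Lemma~\ref{lem_Cxbdd}. The only point requiring a little care is recognizing at the outset that, because $C_0^+$ is a cone, its projection is either trivial or unbounded, which is what makes boundedness at $x$ and at $0$ match up so cleanly.
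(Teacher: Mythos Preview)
Your proof is correct and follows essentially the same approach as the paper: both directions rest on the translation identity $C_x = x + C_0$, and the ``if'' direction in both cases invokes Lemma~\ref{lem_Cxbdd} to produce $z \in C_x^+$ with $z_i > x_i$ and $z - x \geq 0$, so that $z - x \in C_0^+$ with positive $i$th coordinate. The only cosmetic difference is in the ``only if'' direction: the paper argues directly from the containment $C_0^+ + \{x\} \subset C_x^+$, while you first record the cone observation $\pi_i(C_0^+) \subset \{0\} \cup (0,\infty)$ and then exhibit an explicit unbounded ray; both are equally short.
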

\begin{proof}
{\em only if:} 
We note that $C_x = \{x\} + C_0$ and hence 
\[
(C_0 \cap \real_+^N)  + \{x\} = C_x \cap (\real_+^N + \{x\}) \subset C_x \cap \real_+^N.
\]
Thus $\pi_i(C_0^+ + \{x\})$ is bounded above and hence so is $\pi_i(C_0^+)$.

{\em if:}  Suppose $\pi_i(C_x^+)$ is unbounded above. Then by Lemma \ref{lem_Cxbdd} there exists $z \in C_x^+$ such that $z_i>x_i$ and $z-x \geq 0$. This implies $z-x \in C_0^+$ and $(z-x)_i>0$, which in turn implies that $\pi_i(C_0^+)$ is unbounded above.
\end{proof}

For $x \in \posint^N$ the study of $\sS_x$ reduces to the study of $C_x^+$ because of the following lemma. 

\begin{lemma}\label{lem_SbddCbdd}
Let $1 \leq i \leq N$ and $x \in \posint^N$. Then $\pi_i(\sS_x)$ is bounded above if and only if $\pi_i(C_x^+)$ is bounded above. 
\end{lemma}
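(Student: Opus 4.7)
My plan is to prove the two directions separately, leveraging the results just established.

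For the \emph{if} direction, I would simply observe the inclusion $\sS_x \subset C_x^+$. Indeed, any $y \in \sS_x$ has the form $y = x + \nu v$ for some $v \in \posint^M \subset \real_+^M$, and $y \in \posint^N \subset \real_+^N$, so $y \in C_x^+$. Hence $\pi_i(\sS_x) \subset \pi_i(C_x^+)$ and boundedness of the latter implies boundedness of the former.

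For the \emph{only if} direction, I would argue the contrapositive. Assume $\pi_i(C_x^+)$ is unbounded above. By Lemma \ref{lem_Cxbdd}, there exists $z \in C_x^+$ with $z_i > x_i$ such that $z - x$ has no negative component, i.e.\ $z - x \geq 0$. Writing $z - x = \nu v$ for some $v \in \real_+^M$, and letting $\mu$ denote the $i$th row of $\nu$, we have $\mu v > 0$ and $\nu v \geq 0$. Corollary \ref{corr-integral} then produces an integer $w \in \posint^M$ with $\mu w > 0$ and $\nu w \geq 0$. Setting $y_k = x + k\nu w$ for $k \in \posint$, each $y_k$ lies in $\sS_x$ (since $kw \in \posint^M$ and $y_k \geq x \geq 0$), and $\pi_i(y_k) = x_i + k(\mu w)$ tends to infinity with $k$. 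Thus $\pi_i(\sS_x)$ is unbounded above, completing the contrapositive.

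The proof is essentially a packaging of the integrality mechanism already built into Corollary \ref{corr-integral} with the geometric characterization of Lemma \ref{lem_Cxbdd}, so no real obstacle arises; the only point requiring a moment's care is checking that $y_k \in \posint^N$, which uses both $x \in \posint^N$ and $\nu w \geq 0$ with integer entries.
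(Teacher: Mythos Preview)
Your proof is correct and follows essentially the same route as the paper's: the inclusion $\sS_x \subset C_x^+$ for one direction, and for the other the contrapositive via Lemma~\ref{lem_Cxbdd} to obtain $v \in \real_+^M$ with $\mu v > 0$, $\nu v \geq 0$, followed by an integrality argument to get $w \in \posint^M$ and an unbounded sequence in $\sS_x$. Your citation of Corollary~\ref{corr-integral} is in fact the more appropriate one here (the paper invokes Lemma~\ref{lem_integral}, whose hypothesis is $Bv=0$ rather than $Bv\geq 0$), and your explicit check that $y_k \in \posint^N$ is a welcome bit of care.
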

\begin{proof}
{\em if:} Follows since $\sS_x \subset C_x^+$. 
 
{\em only if:} Let $\mu \in \integ^M$ be such that $\mu^T$ is the $i$th 
row of $\nu$. If $\pi_i(C_x^+)$ is unbounded above then by Lemma \ref{lem_Cxbdd} there exists $z \in C_x^+$ such that $z_i > x_i$ and $z \geq x$. Hence there exists $v \in \real_+^M$ such that $\mu^T v > 0$ and $\nu v \geq 0$. 
From Lemma \ref{lem_integral} we may conclude that there exists $w \in \posint^M$ 
such that $\mu^T w > 0$ and $\nu w \geq 0$. It follows that the sequence $y^{(n)}$ 
defined by $y^{(n)} = x_i + n \mu^T w$ is a sequence in $\pi_i(\sS_x)$ that tends to $+\infty$.
\end{proof}

\begin{corollary}\label{corr_SbddC0bdd}
Let $1 \leq i \leq N$ and $x \in \posint^N$. Then $\pi_i(\sS_x)$ is bounded above if and only if $\pi_i(C_0^+)$ is bounded above. 
\end{corollary}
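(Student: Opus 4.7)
The plan is to obtain the result as an immediate consequence of two previously established equivalences. Specifically, Lemma \ref{lem_SbddCbdd} provides the equivalence $\pi_i(\sS_x)$ bounded above $\Leftrightarrow$ $\pi_i(C_x^+)$ bounded above, while Lemma \ref{lem_C0bddCxbdd} provides the equivalence $\pi_i(C_x^+)$ bounded above $\Leftrightarrow$ $\pi_i(C_0^+)$ bounded above. Chaining these two biconditionals yields the desired statement, so no new work beyond invoking these results is required.

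Concretely, I would write a one-line proof: given $x \in \posint^N \subset \real_+^N$, apply Lemma \ref{lem_SbddCbdd} to pass from $\sS_x$ to $C_x^+$, then apply Lemma \ref{lem_C0bddCxbdd} to pass from $C_x^+$ to $C_0^+$. Since both lemmas are already stated as ``if and only if'' statements and the hypotheses (an integer initial state $x \in \posint^N$ viewed inside $\real_+^N$, with a fixed index $1 \leq i \leq N$) are exactly what the corollary assumes, the composition is immediate.

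There is no real obstacle here; the corollary is purely a transitivity step. The only thing worth flagging is that Lemma \ref{lem_C0bddCxbdd} is stated for $x \in \real_+^N$, which covers our case $x \in \posint^N$ without modification, so no compatibility check between the two lemmas is needed. Consequently the proof reduces to a single sentence citing both lemmas.
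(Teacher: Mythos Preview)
Your proposal is correct and matches the paper's own proof exactly: the paper simply writes ``Follows from Lemmas \ref{lem_C0bddCxbdd} and \ref{lem_SbddCbdd},'' which is precisely the chaining of biconditionals you describe.
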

\begin{proof}
Follows from Lemmas \ref{lem_C0bddCxbdd} and \ref{lem_SbddCbdd}. 
\end{proof}

Suppose a certain non-negative linear combination 
\[
\alpha_1 X_1(t) + \dots + \alpha_N X_N(t)
\]
of species is always nonincreasing with time and suppose $\alpha_i>0$.
Then we can write 
\[
X_i(t) \leq (1/\alpha_i) \sum_{j \neq i} \alpha_j X_j(0),
\]  
to conclude that species $i$ is bounded. The existence of a nondecreasing 
non-negative linear combination can be equivalently  stated as the existence of 
$\alpha \geq 0$ such that $\alpha^T \nu \leq 0$.
 
However the fact that the converse is also true is not obvious and 
requires results from the study of convex and cone sets as seen in the 
following theorem which 
provides a necessary and sufficient condition for stoichiometric boundedness 
of a species. 

\begin{theorem}
Species $i$ is stoichiometricaly 
bounded if and only if there exists a vector $\alpha \in \posint^N$ such that $\alpha \geq 0$, 
$\alpha_i>0$ and $\alpha^T \nu \leq 0$. 
\label{thm_stoichbdd}
\end{theorem}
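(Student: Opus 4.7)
The plan is to prove the two directions separately. The \emph{if} direction is a direct ``dissipation'' argument: given $\alpha\in\posint^N$ with $\alpha\ge 0$, $\alpha_i>0$, and $\alpha^T\nu\le 0$, take any $y\in\sS_x$, write $y=x+\nu v$ with $v\in\posint^M$, and observe that $\alpha^T y=\alpha^T x+(\alpha^T\nu)v\le\alpha^T x$. Since $\alpha_j\ge 0$ and $y_j\ge 0$ for every $j$, this gives $\alpha_i y_i\le\alpha^T y\le\alpha^T x$, so $y_i\le\alpha^T x/\alpha_i$; hence $\pi_i(\sS_x)$ is bounded.

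For the \emph{only if} direction, assume species $i$ is stoichiometrically bounded. By Corollary \ref{corr_SbddC0bdd}, $\pi_i(C_0^+)$ is bounded above; since $C_0^+$ is a cone with vertex $0$, closure under positive scaling forces $z_i=0$ for every $z\in C_0^+$. Translated to $\nu$, the linear system
\[
\nu v \ge 0,\qquad (\nu v)_i \ge 1,\qquad v\in\real_+^M
\]
has no real solution. Apply Farkas' lemma (in the form: $Av\le b$, $v\ge 0$ is infeasible iff some $y\ge 0$ satisfies $A^T y\ge 0$ and $b^T y<0$) to this system, rewritten as $-\nu v\le 0$ and $-e_i^T\nu v\le -1$. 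This yields $\alpha_0\in\real_+^N$ and a scalar $\beta>0$ with $\nu^T(\alpha_0+\beta e_i)\le 0$; setting $\tilde\alpha=\alpha_0+\beta e_i$ produces a real-valued $\tilde\alpha\ge 0$ with $\tilde\alpha_i>0$ and $\tilde\alpha^T\nu\le 0$.

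To upgrade $\tilde\alpha$ to an integer vector, rewrite $\tilde\alpha^T\nu\le 0$ as $\nu^T\tilde\alpha+s=0$ with a slack $s\in\real_+^M$ and apply Lemma \ref{lem_integral} to the augmented vector $(\tilde\alpha,s)\in\real_+^{N+M}$, using the integer matrix $B=(\nu^T,\ I)\in\integ^{M\times(N+M)}$ and the real row $A=(e_i^T,\ 0)$. The hypotheses $Bv=0$ and $Av>0$ of the lemma encode exactly $\tilde\alpha^T\nu\le 0$ and $\tilde\alpha_i>0$, so Lemma \ref{lem_integral} supplies a $w\in\posint^{N+M}$ whose first $N$ coordinates form the desired $\alpha\in\posint^N$ with $\alpha_i\ge 1$ and $\alpha^T\nu\le 0$. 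The main obstacle is identifying and applying the correct theorem of alternatives at the real-valued stage; given that, the integrality upgrade is essentially bookkeeping using Lemma \ref{lem_integral}.
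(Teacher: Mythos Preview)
Your proof is correct and follows the same overall architecture as the paper's: establish the real-valued certificate $\tilde\alpha\in\real_+^N$ with $\tilde\alpha_i>0$ and $\tilde\alpha^T\nu\le 0$ via a theorem of alternatives, then upgrade to an integer vector using the rationality lemma. The differences are in the packaging. For the \emph{if} direction you work directly with $\sS_x$, while the paper passes to $C_0^+$ via Corollary~\ref{corr_SbddC0bdd}; both are one-line arguments. For the \emph{only if} direction the paper uses polar-cone duality: it identifies $(C_0)^o=B_0=\{\alpha:\alpha^T\nu\le 0\}$, invokes Lemma~\ref{lem_polar_cap} to get $(C_0^+)^o=\text{cl}(B_0-\real_+^N)$, notes this set is polyhedral hence closed, and reads off $e_i=\alpha-u$ with $\alpha\in B_0$, $u\ge 0$. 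Your Farkas argument is the same separation theorem in coordinates, and is arguably more self-contained since it does not require the polar-of-intersection lemma from the appendix. For the integer upgrade the paper applies Corollary~\ref{corr-integral} directly (with $A=e_i^T$ and $B=-\nu^T$), which is slightly cleaner than your slack-variable reduction to Lemma~\ref{lem_integral}, though both are valid.
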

\begin{proof}
We shall use Corollary \ref{corr_SbddC0bdd} to work with $C_0^+$. 

{\em if}: For all $y \in C_0$ there exists $v \in \real^M$ such that $v \geq 0$ and $y = \nu v$. Now suppose $y \in C_0^+$. Then $\alpha^T y \geq 0$.
However, since $\alpha^T \nu \leq 0$, and $v \geq 0$, we have that
\[
\alpha^T y = \alpha^T \nu v \leq 0.
\]
Thus $\alpha^T y =0$. Since $\alpha_i>0$ it follows that $y_i=0$. 
Hence $\pi_i(C_0^+)=\{0\}$, and is bounded above.

{\em only if}: Define the set $B_0$ by 
\[
B_0 = \{ \alpha \in \real^N \, | \, \alpha^T \nu \leq 0 \},
\]
and note that $B_0=(C_0)^o$ i.e. the polar of $C_0$. (See Appendix for some basics on convex analysis and definitions).
To see this suppose $\alpha \in B_0$. If $y \in C_0$ then there exists $v \geq 0$ such that $y = \nu v$ and hence $\alpha^T y = \alpha \nu v \leq 0$. Hence $\alpha \in (C_0)^o$. Conversely if $\alpha \in (C_0)^o$ then for all $y \in C_0$ it holds that $\alpha^T y \leq 0$. Since $\nu_1,\dots,\nu_M \in C_0$ it follows that $\alpha^T \nu \leq 0$ and thus $\alpha \in B_0$. 

Since $\pi_i(C_0^+)$ is bounded above it follows that $y_i=0$ for all $y \in C_0^+$. To see this, suppose $y \in C_0^+$ and $y_i>0$. Since $C_0^+$ is a cone by taking positive multiples of $y$ we can obtain arbitrarily large elements in $\pi_i(C_0^+)$ violating the assumption that $\pi_i(C_0^+)$ is bounded above.
Hence $e_i \in \real^N$ ($i$th standard basis vector) satisfies $e_i^T y =0 \leq 0$ for all $y \in C_0^+$ and therefore by definition $e_i \in (C_0^+)^o$. 

Now using $(\real_+^N)^o = -\real_+^N$, $B_0=(C_0)^o$ and Lemma \ref{lem_polar_cap} we have that
\[
(C_0^+)^o = (C_0 \cap \real_+^N)^o = \text{cl}((C_0)^o + (\real_+^N)^o) = \text{cl}(B_0 - \real_+^N).
\]
Since $B_0$ and $\real_+^N$ are polyhedral so is $B_0 - \real_+^N$ and hence
 $B_0 - \real_+^N$ is closed. Thus $e_i \in B_0 - \real_+^N$. So $e_i = \alpha - u$ for some $\alpha \in B_0$ and $u \in \real_+^N$. Hence $\alpha = u + e_i$ and thus $\alpha \geq 0$ and $\alpha_i>0$. Since $\alpha \in B_0$ it follows that $\alpha^T \nu \leq 0$.  

Thus we have shown that there exists $\alpha \in \real_+^N$ such that $\alpha_i>0$ and $\alpha^T \nu \leq 0$. By Corollary \ref{corr-integral} it follows that we can choose such $\alpha \in \posint^N$.

\end{proof}

The following corollary is immediate. 
\begin{corollary}
A subset $I \subset \{1,\dots,N\}$ of species is stoichiometricaly 
bounded if and only if there exists a vector $\alpha \in \posint^N$ such that $\alpha \geq 0$, 
$\alpha_i>0$ for $i \in I$ and $\alpha^T \nu \leq 0$. 
\label{cor_stoichbdd}
\end{corollary}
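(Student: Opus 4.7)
The plan is to reduce the corollary to Theorem \ref{thm_stoichbdd} applied species by species, exploiting the fact that both the constraint $\alpha \geq 0$ and the inequality $\alpha^T \nu \leq 0$ are preserved under non-negative linear combinations.

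For the \emph{if} direction, suppose $\alpha \in \posint^N$ with $\alpha \geq 0$, $\alpha^T \nu \leq 0$, and $\alpha_i > 0$ for every $i \in I$. Then for each individual $i \in I$, the same vector $\alpha$ satisfies the hypothesis of the \emph{if} direction of Theorem \ref{thm_stoichbdd}, so species $i$ is stoichiometrically bounded. Since this holds for every $i \in I$, the subset $I$ is stoichiometrically bounded.

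For the \emph{only if} direction, suppose every species $i \in I$ is stoichiometrically bounded. Applying Theorem \ref{thm_stoichbdd} to each such $i$ individually, I would obtain a vector $\alpha^{(i)} \in \posint^N$ with $\alpha^{(i)} \geq 0$, $\alpha^{(i)}_i > 0$, and $(\alpha^{(i)})^T \nu \leq 0$. Now set
\[
\alpha = \sum_{i \in I} \alpha^{(i)}.
\]
Since $\posint^N$ is closed under addition, $\alpha \in \posint^N$ and $\alpha \geq 0$. For any fixed $k \in I$, the $k$th coordinate satisfies $\alpha_k \geq \alpha^{(k)}_k > 0$, so $\alpha_k > 0$ for all $k \in I$. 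Finally, linearity gives
\[
\alpha^T \nu = \sum_{i \in I} (\alpha^{(i)})^T \nu \leq 0,
\]
since each term in the sum is a non-positive row vector. This $\alpha$ is the required witness.

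There is no real obstacle here; the statement is essentially a packaging of finitely many applications of Theorem \ref{thm_stoichbdd}, and the key observation is that the feasibility set $\{\alpha \geq 0 : \alpha^T \nu \leq 0\}$ is a convex cone, so summing witnesses for individual coordinates yields a simultaneous witness. The only thing to double-check is that the resulting sum preserves integrality, which is immediate from $\posint^N$ being closed under addition.
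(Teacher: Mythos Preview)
Your proof is correct and is exactly the argument the paper has in mind: the paper simply declares the corollary ``immediate'' from Theorem~\ref{thm_stoichbdd} without writing out a proof, and your summation-of-witnesses argument is the natural way to unpack that.
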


In order to facilitate the discussion of boundedness of species we shall define the notion of 
a {\em counting sequence} as follows. A finite sequence $(u_1,\dots,u_m)$ where $u_j \in \posint^M$ is said to be a {\em counting sequence} provided $u_1=0$ and for $j=1,\dots,m-1$ $u_{j+1}-u_j$ has precisely one component 
of value $1$ with all other components being zero. The following lemma is immediate.

\begin{lemma}\label{lem_count}
Suppose the propensity function is regular. Then for given $x \in \posint^N$, a state $y \in \sA_x$ if and only if there exists a
counting sequence $(u_1,\dots,u_m)$ in $\posint^M$ such that 
\[
x + \nu u_j \geq 0, \quad j=1,\dots,m,
\]
and $y = x + \nu u_m$. 
\end{lemma}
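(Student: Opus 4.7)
The plan is to unpack both the definition of $\sA_x$ given earlier and the definition of a counting sequence, and to observe that under regularity the two encode exactly the same data: a counting sequence is essentially a bookkeeping of how many times each reaction has fired, and the nonnegativity condition $x+\nu u_j\ge 0$ is the counterpart of each intermediate state lying in $\posint^N$. The nontrivial content is that regularity lets us pass back and forth between "each intermediate state is in $\posint^N$" and "each firing has positive propensity".

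For the \emph{only if} direction, I would start with $y\in\sA_x$ and invoke the characterization stated in Section \ref{sec-bdd}: there is a finite index sequence $(j_1,\dots,j_n)$ in $\{1,\dots,M\}$ with intermediate states $y^{(0)}=x$, $y^{(l)}=y^{(l-1)}+\nu_{j_l}\in\posint^N$, $y^{(n)}=y$, and $a_{j_l}(y^{(l-1)})>0$. Setting $u_1=0$ and $u_{l+1}=u_l+e_{j_l}$, where $e_{j_l}\in\posint^M$ is the standard basis vector, produces a counting sequence of length $m=n+1$ by construction, and telescoping gives $x+\nu u_{l+1}=x+\sum_{k=1}^l\nu_{j_k}=y^{(l)}\ge 0$, with $y=x+\nu u_m$. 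Regularity plays no role here; properness alone is enough.

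For the \emph{if} direction, given a counting sequence $(u_1,\dots,u_m)$ with $x+\nu u_j\ge 0$ for all $j$, let $j_l\in\{1,\dots,M\}$ be the unique coordinate in which $u_{l+1}-u_l$ equals $1$, and set $z^{(l)}=x+\nu u_{l+1}$ for $l=0,\dots,m-1$, with $z^{(-1)}=x$. Each transition satisfies $z^{(l)}-z^{(l-1)}=\nu_{j_l}$ and, by hypothesis, $z^{(l)}\in\posint^N$. This is the point where regularity enters: because $a_{j_l}(z^{(l-1)})=0$ iff $z^{(l-1)}+\nu_{j_l}=z^{(l)}\notin\posint^N$, we may conclude $a_{j_l}(z^{(l-1)})>0$. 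So $(j_1,\dots,j_{m-1})$ meets the definition of $\sA_x$ with final state $y=x+\nu u_m$.

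The only potential pitfall is the use of regularity in the second direction: without it, the nonnegativity of $x+\nu u_j$ would not suffice to force the corresponding propensity to be positive, and the lemma would fail. Both directions are otherwise routine bookkeeping from the definitions, so no further obstacles are expected.
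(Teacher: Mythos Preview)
Your proof is correct and follows exactly the same approach as the paper's: translating between reaction-index sequences and counting sequences, using properness for the forward direction and regularity for the converse. The paper's own proof is only a two-sentence sketch, and your version simply fills in the bookkeeping details that the paper leaves implicit.
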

\begin{proof}
If $y \in \sA_x$ there must be a sequence of reaction events which can move 
the state from $x$ to $y$ without leaving $\posint^N$. Conversely if there 
is such a sequence then under the assumption of regularity of the propensity function, such a sequence will have nonzero probability of happening. 
\end{proof}

Finally we have the following theorem which relates boundedness of a species with its stochiometric boundedness. 

\begin{theorem}\label{thm_bdd_sbdd}
\begin{enumerate}
\item If species $i$ is stoichiometricaly bounded then it is bounded. 
\item Conversely 
if species $i$ is stoichiometricaly unbounded and the propensity function 
is regular then the species $i$  is unbounded for all 
sufficiently large initial conditions.   
\end{enumerate}
\end{theorem}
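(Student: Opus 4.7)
The plan is to handle the two directions separately, since Part 1 is essentially immediate from the machinery already established while Part 2 requires some additional work with counting sequences.

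For Part 1, I would observe that stoichiometric boundedness of species $i$ is, by Corollary \ref{corr_SbddC0bdd}, equivalent to $\pi_i(C_0^+)$ being bounded above, which in turn implies $\pi_i(\sS_x)$ is bounded above for every initial state $x \in \posint^N$. Since every sequence of reactions produced by the actual dynamics is a particular $v \in \posint^M$ with $y = x + \nu v \geq 0$, we have the inclusion $\sA_x \subset \sS_x$ and thus $\pi_i(\sA_x)$ is bounded above as well. This is a one-line argument.

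For Part 2, the main step is to convert the obstruction coming from stoichiometric unboundedness into an explicit counting sequence that actually fires under the dynamics. First I would extract, from the assumption that species $i$ is stoichiometrically unbounded, a witness vector $w \in \posint^M$ satisfying $\nu w \geq 0$ and $\mu^T w \geq 1$, where $\mu^T$ is the $i$th row of $\nu$. This follows by combining the contrapositive of Corollary \ref{corr_Sbdd} (which produces $z \in \sS_x$ with $z \geq x$ and $z_i > x_i$) with the definition of $\sS_x$ (which gives $z = x + \nu w$ for some $w \in \posint^M$).

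Next, I would set $K := \sum_{j=1}^M w_j \, \|\nu_j\|_\infty$ and consider any initial state $x \in \posint^N$ with $x_l \geq K$ for all $l = 1, \ldots, N$. I fix any ordering of the $|w|_1 := w_1 + \cdots + w_M$ reaction events that fires reaction $j$ exactly $w_j$ times, producing a counting sequence $(u_1, \ldots, u_{m})$ with $u_1 = 0$ and $u_m = w$. Since $0 \leq u_j \leq w$ componentwise, every intermediate vector $\nu u_j$ satisfies $\|\nu u_j\|_\infty \leq K$, so $x + \nu u_j \geq 0$ throughout. By Lemma \ref{lem_count} (using regularity of the propensity function), this yields $x + \nu w \in \sA_x$. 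Because $\nu w \geq 0$, the new state still dominates $K \mathbf{1}$ componentwise, so the same counting sequence can be appended, and accessibility is transitive; iterating gives $x + k\nu w \in \sA_x$ for every $k \in \posint$. The $i$th coordinate is $x_i + k \mu^T w \to \infty$, so $\pi_i(\sA_x)$ is unbounded above and species $i$ is unbounded for this initial condition.

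The main obstacle is the bookkeeping in step (ii): we need to ensure that at least one ordering of the reactions in $w$ can actually be executed without the trajectory leaving $\posint^N$, and this is precisely what forces the ``sufficiently large initial condition'' clause. Once $x$ is large enough to absorb the worst intermediate deviation $\|\nu u\|_\infty$ for $0 \leq u \leq w$, any ordering works, and the rest of the argument is essentially iteration and the use of regularity.
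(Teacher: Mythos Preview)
Your proposal is correct and follows essentially the same route as the paper. The only cosmetic difference is that the paper fixes one counting sequence $(u_1,\dots,u_m)$ with $u_m=w$ and defines the threshold initial state componentwise via $\bar{x}_l=\max\{0,-(\nu u_1)_l,\dots,-(\nu u_m)_l\}$, whereas you use the uniform bound $K=\sum_j w_j\|\nu_j\|_\infty$ that dominates all such intermediate deviations; both then iterate via $x+k\nu w$ and invoke Lemma~\ref{lem_count} under regularity.
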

\begin{proof}
The first part is obvious. We shall prove the second part. 
Since the species $i$ is stoichiometricaly unbounded, from Corollary \ref{corr_Sbdd} there exists 
$v \in \posint^M$ such that $\mu^T v >0$ and $\nu v \geq 0$ where 
$\mu^T$ is the $i$th row of $\nu$. 
%Thus the set $V$ defined by
%\[
%V = \{u \in \posint^M \, | \, \mu^T u >0, \, \nu u \geq 0\}
%\]
%is nonempty. Let $v_0$ be an element in $V$, and 
Let $u_1,u_2,\dots,u_m$ be a counting sequence with $u_m=v$ and define $\bar{x} \in \posint^N$ by the condition that for $i=1,\dots,N$ 
\[
\bar{x}_i = \max\{0, -(\nu u_1)_i, -(\nu u_2)_i, \dots, -(\nu u_m)_i \}.
\]
Then for all $x \in \posint^N$, satisfying $x \geq \bar{x}$ we have that 
\[
x + \nu u_j \geq 0, \quad j=1,\dots,m.
\]
Thus it follows by Lemma \ref{lem_count} that $x + \nu v \in \sA_x$. 
Define the sequence $(y^{(n)})$ for $n \in \nat$ by $y^{(n)} = x + n \nu v$. 
It is easy to show using mathematical induction that $y^{(n)} \in \sA_x$ for all $n$ and that $y^{(n)}_i$ 
is strictly increasing so that 
$\pi_i(\sA_x)$ is unbounded above. 
 
\end{proof}

\section{Moment growth bounds}\label{sec-moms}
In order to facilitate the development of results concerning moment growth bounds we shall define {\em critical species} and {\em critical reactions} as follows. 

We say that species $i$ is a critical species if and only if it is not 
stoichiometricaly bounded. Without loss of generality we assume that the 
species are ordered such that the copy number vector 
$x=(y,z) \in \posint^{N_c} \times \posint^{N-N_c}$ where $y$ is the 
copy number vector of critical species,  $z$ is the copy number vector of 
non-critical species and $N_c$ is the number of critical species. 
A reaction channel $j$ is non-critical if and only if there 
exists $H:\posint^{N-N_c} \to \real$ such that 
\[
a_j(x) \leq H(z) (\|y \|+1), \quad \forall x = (y,z) \in \posint^{N}.
\]
In other words critical reactions are those whose propensities grow faster than linearly in the critical species. We shall use $M_c$ to denote the number of critical reactions. 
Without loss of generality we shall assume that the reaction channels are ordered so that $j=1,\dots,M_c$ correspond to the critical reactions. 

In what follows, 
given a system with stochiometric matrix $\nu$, we define the $N_c \times M$ matrix $\nu^1$ termed the {\em critical species stoichiometric matrix} to be 
the submatrix of $\nu$ consisting of the rows $1,\dots,N_c$ corresponding to 
the critical species and we define the $(N-N_c) \times M$ matrix 
$\nu^2$ termed the {\em non-critical species stoichiometric matrix} to be the 
submatrix of $\nu$ which consists of rows $N_c+1,\dots,N$ corresponding to 
non-critical species. We also define the $N_c \times M_c$ matrix $\nu^c$ termed the {\em critical stoichiometric matrix} to be the 
submatrix of $\nu$ consisting of the rows $1,\dots,N_c$ corresponding to 
the critical species and columns $1,\dots,M_c$ corresponding to critical reactions. 

We first state a lemma. 
\begin{lemma}\label{lem_convenient}
Suppose a system has regular propensity functions. Let $J \subset \{1,\dots,M\}$ be a subset of reactions. 
Then the following are equivalent:
\begin{enumerate}
\item There exists a norm $\| .\|$ in $\real^N$ such that the following holds:
for all $x \in \posint^N$ and for all $j \in J$ 
\[
 x+\nu_j \in \posint^N  \Rightarrow   \|x+\nu_j\| \leq \|x\|.
\]
\item  For the system consisting only of the reactions in $J$ all the species are stoichiometricaly bounded.
\item There exists $\alpha \in \posint^N$ such that $\alpha>0$ and $\alpha^T \nu_j \leq 0$ for all $j \in J$.
\end{enumerate}
\end{lemma}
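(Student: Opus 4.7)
The plan is to establish the cycle $(3) \Rightarrow (1) \Rightarrow (2) \Rightarrow (3)$. The equivalence $(2) \Leftrightarrow (3)$ is essentially immediate from the corollary that has just been proved, and $(3) \Rightarrow (1)$ is a direct construction; the substance of the lemma lies in $(1) \Rightarrow (2)$, which uses the counting-sequence machinery from the proof of Theorem \ref{thm_bdd_sbdd}.

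For $(3) \Rightarrow (1)$, given $\alpha \in \posint^N$ with $\alpha>0$ and $\alpha^T \nu_j \le 0$ for $j \in J$, I would define the weighted norm $\|x\| = \sum_{i=1}^N \alpha_i |x_i|$. Since every component $\alpha_i$ is strictly positive, this is genuinely a norm on $\real^N$. For any $x \in \posint^N$ with $x+\nu_j \in \posint^N$, both $x$ and $x+\nu_j$ have non-negative coordinates, so $\|x+\nu_j\| = \alpha^T(x+\nu_j) = \alpha^T x + \alpha^T \nu_j \le \alpha^T x = \|x\|$, as required.

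For $(2) \Rightarrow (3)$, apply Corollary \ref{cor_stoichbdd} to the subsystem whose stoichiometric matrix consists only of the columns $\nu_j$ for $j \in J$, taking $I = \{1,\dots,N\}$. This yields $\alpha \in \posint^N$ with $\alpha \geq 0$, $\alpha_i > 0$ for every $i$ (i.e.\ $\alpha>0$), and $\alpha^T \nu_j \le 0$ for all $j \in J$, which is exactly (3).

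The main obstacle is $(1) \Rightarrow (2)$, where we must convert the local one-step norm inequality into a contradiction when some species is stoichiometrically unbounded in the subsystem. Suppose (2) fails, so some species $i$ is stoichiometrically unbounded for the subsystem on reactions $J$. By Corollary \ref{corr_Sbdd} applied to the subsystem, there exist $v \in \posint^{|J|}$ such that, writing $\mu^T$ for the $i$th row of the submatrix $\nu|_J$, we have $\mu^T v > 0$ and $\nu|_J v \geq 0$. Now mimic the construction in the proof of Theorem \ref{thm_bdd_sbdd}: pick a counting sequence $u_1,\dots,u_m$ in $\posint^{|J|}$ with $u_m = v$, and let $\bar x \in \posint^N$ be the componentwise maximum of $0$ and the $-\nu|_J u_k$'s so that $x \ge \bar x$ forces $x + \nu|_J u_k \in \posint^N$ for every $k$. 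Starting from any $x \geq \bar x$, the states $x^{(n)} := x + n\,\nu|_J v$ satisfy $x^{(n)} \geq x \geq \bar x$ (because $\nu|_J v \geq 0$), so the same counting sequence can be concatenated to produce a chain of states in $\posint^N$ joining $x^{(n)}$ to $x^{(n+1)}$, each consecutive pair differing by a single $\nu_j$ with $j \in J$. Applying (1) along every edge of this chain, $\|x^{(n)}\|$ is non-increasing in $n$, so $\|x^{(n)}\| \le \|x\|$ for all $n$. But $x^{(n)}_i = x_i + n\,\mu^T v \to \infty$, contradicting the fact that all norms on $\real^N$ are equivalent and hence control every coordinate. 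This contradiction proves (2), closing the cycle.
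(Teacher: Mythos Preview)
Your proof is correct and follows the same overall architecture as the paper: $(2)\Leftrightarrow(3)$ via Corollary~\ref{cor_stoichbdd}, and $(3)\Rightarrow(1)$ via the weighted $\ell^1$ norm $\|x\|=\sum_i \alpha_i|x_i|$, exactly as written there.

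The only difference is in the handling of $(1)\Rightarrow(2)$. The paper disposes of this in one line (``it is also clear that 1 implies 2''), implicitly leaning on Lemma~\ref{lem_count} and Theorem~\ref{thm_bdd_sbdd}: under regularity every $y\in\sA_x$ is reached by a counting sequence inside $\posint^N$, so (1) forces $\|y\|\le\|x\|$ and hence every species is bounded for every initial condition; the contrapositive of Theorem~\ref{thm_bdd_sbdd}(2) then gives stoichiometric boundedness. You instead re-run the counting-sequence construction from that theorem's proof directly, producing the explicit unbounded chain $x^{(n)}=x+n\,\nu|_J v$ and contradicting the norm bound. That is perfectly valid and more self-contained; it also makes transparent where the regularity hypothesis is actually used (namely, it is not needed at all once you argue purely stoichiometrically as you do, since your chain lives in $\posint^N$ by construction rather than by appeal to $\sA_x$).
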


\begin{proof}
First we note that conditions 2 and 3 are equivalent by Corollary \ref{cor_stoichbdd}. It is also clear that 1 implies 2 (and hence 3). Thus it suffices to show 3 implies 1. Suppose 3 holds. Define the norm $\|.\|$ on $\real^N$ by 
\[
\|x\| = \sum_{i=1}^N \alpha_i |x_i|.
\]
Let $x \in \posint^N$ and suppose $x+\nu_j \in \posint^N$ for some 
$j \in J$. Then 
\[
\|x+\nu_j\| = \sum_{i=1}^N \alpha_i (x_i+\nu_{ij}) = \sum_{i=1}^N \alpha_i x_i + \sum_{i=1}^N \alpha_i \nu_{ij} = \|x\| + \alpha^T \nu_j \leq \|x\|.
\]
\end{proof}

We remark that Lemma \ref{lem_convenient} is typically used with 
$J=\{1,\dots,M_c\}$, the set of critical of reactions.

In order to discuss how moments $E(\|X(t)\|^r)$ evolve in time, 
first we note that the {\em generator} $\sA$ of the Markov process with stoichiometric matrix $\nu$ and propensity function $a$ is given by
\begin{equation}\label{eq_gen}
(\sA h)(x) = \sum_{j=1}^M \left(h(x+\nu_j)-h(x) \right) a_j(x),
\end{equation}
where $h:\posint^N \to \real$ and we use the convention that $h(y)=0$ if $y \notin \posint^N$. We note that $\sA$ is regarded as an operator on the Banach 
space $\sL$ of bounded functions $h:\posint^N \to \real$ and that the domain of $\sA$ is not all of $\sL$ as $a_j$ are typically not bounded functions. 
However the collection of all functions $h$ that are constant outside a compact subset of $\posint^N$ are in the domain of $\sA$.

  It follows from standard Markov process theory that for all functions $h:\posint^N \to \real$ in the domain of $\sA$ the following formula, some times known as Dynkin's formula, holds for all $t \geq 0$:
\begin{equation}\label{eq_Dynkin}
\frac{d}{dt} E(h(X(t))) = \sum_{j=1}^M E\left[ \left(h(X(t)+\nu_j)-h(X(t)) \right) a_j(X(t)) \right],
\end{equation} 
or equivalently in integral form
\begin{equation}\label{eq_Dynkin_int}
E(h(X(t))) = E(h(X(0))) + \sum_{j=1}^M \int_0^t E\left[ \left(h(X(s)+\nu_j)-h(X(s)) \right) a_j(X(s)) \right] ds.
\end{equation} 
We suggest \cite{EK-book} as a general reference. 

For $r \in \nat$ we define the class $\scP_r$ to be the set of functions $f:\posint^N \to \real$ 
characterized by the condition that $f \in \scP_r$ if and only if there exist
$H>0$ such that 
\[
|f(x)| \leq H (\|x\|^r +1), \quad \forall x \in \posint^N,
\]
and define $\scP_r^+$ to denote the subset of $\scP_r$ consisting of non-negative functions. We also define $\scP$ by 
\[
\scP = \bigcup_{r \in \posint} \scP_r,
\]
and $\scP^+$ to denote the subset of $\scP$ consisting of non-negative functions.
We observe that the definition of classes  $\scP_r, \scP$ is independent of the choice of norm on $\real^N$. We establish a few lemmas about classes $\scP_r,\scP$ first. 

\begin{lemma}\label{lem_r_s}
Suppose $r,s \in \posint$ and $r \leq s$. Then there exists $H>0$ such that 
\[
\|x\|^r \leq H \|x\|^s, \quad x \in \posint^N.
\]
Thus $\scP_r \subset \scP_s$. 
\end{lemma}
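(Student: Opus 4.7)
The plan is to prove the pointwise inequality by exploiting the fact that the integer lattice $\posint^N$ is uniformly separated from the origin; the containment $\scP_r \subset \scP_s$ will then be essentially a bookkeeping step. Since the definition of $\scP_r$ is independent of the choice of norm on $\real^N$, I am free to work with any convenient norm when establishing the pointwise bound.

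First I would observe that if $x \in \posint^N$ and $x \neq 0$, then at least one component $x_i$ is an integer $\geq 1$, so in the $\ell^\infty$ norm we have $\|x\|_\infty \geq 1$. By equivalence of norms on $\real^N$, there is a constant $c > 0$ with $\|x\| \geq c \|x\|_\infty \geq c$ for every such $x$. Consequently, for $x \neq 0$,
\[
\|x\|^s = \|x\|^{s-r} \|x\|^r \geq c^{s-r} \|x\|^r,
\]
which rearranges (using $s \geq r \geq 1$, so $c^{s-r}>0$) to $\|x\|^r \leq c^{-(s-r)} \|x\|^s$. For $x = 0$ both sides vanish, so the inequality holds with any $H \geq c^{-(s-r)}$ (or $H=1$ in the trivial case $r=s$).

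Having established the pointwise bound $\|x\|^r \leq H \|x\|^s$ on all of $\posint^N$, the inclusion $\scP_r \subset \scP_s$ follows at once: if $f \in \scP_r$ with $|f(x)| \leq H'(\|x\|^r + 1)$, then
\[
|f(x)| \leq H'(\|x\|^r + 1) \leq H'(H \|x\|^s + 1) \leq H'\max(H,1)\bigl(\|x\|^s + 1\bigr),
\]
so $f \in \scP_s$ with the constant $H' \max(H,1)$.

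There is no real obstacle here; the only point requiring a moment of thought is that the claim $\|x\|^r \leq H \|x\|^s$ is false on all of $\real^N$ (it fails near the origin), so one must use discreteness to handle small $x$. That is precisely what the lower bound $\|x\| \geq c$ on nonzero lattice points provides.
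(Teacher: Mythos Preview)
Your proof is correct and takes essentially the same approach as the paper's: both exploit the discreteness of $\posint^N$ to handle the points near the origin, where the inequality would otherwise fail. The paper splits into $\|x\|\geq 1$ (where $H=1$ works) and the finite set $\{x:\|x\|<1\}$, while you obtain a single uniform lower bound $\|x\|\geq c$ on nonzero lattice points via norm equivalence; the underlying idea is identical.
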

\begin{proof}
This clearly holds for $x=0$ and for $x$ that satisfy $\|x\| \geq 1$ it holds with $H=1$. Since the set of $x$ for which $\|x\|<1$ is finite one may find $H$ large enough for this to hold for all $x \in \posint^N$.
\end{proof} 

\begin{lemma}\label{lem_classP}
The classes $\scP_r,\scP$ are vector spaces (over $\real$) and any (multivariate) polynomial belongs to class $\scP$. 
Suppose $f \in \scP_r$, $y \in \integ^N$ and $g:\posint^N \to \real$ is defined by $g(x) = f(x+y)$ if $x+y \in \posint^N$ else $g(x)=0$. Then $g \in \scP_r$. In other words $\scP_r$ (and hence $\scP$) are shift invariant.
Finally if $f \in \scP_r$ and $g \in \scP_s$ then $h \in \scP_{s+r}$ where $h=fg$.   
\end{lemma}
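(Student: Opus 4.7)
The plan is to verify each of the four assertions in turn by directly unpacking the defining inequality $|f(x)| \leq H(\|x\|^r + 1)$. None of the steps should present real difficulty; the main point to be careful about is justifying that all norms on $\real^N$ are comparable (so we can pass between $\|x\|_\infty$-style bounds and the fixed norm $\|\cdot\|$), and handling the cross terms that arise when expanding $(\|x\| + \|y\|)^r$ or $(\|x\|^r+1)(\|x\|^s+1)$.

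For the vector space claim, I would note that if $f \in \scP_r$ with constant $H_1$ and $g \in \scP_r$ with constant $H_2$, then $|f + g| \leq (H_1 + H_2)(\|x\|^r + 1)$, and similarly for scalar multiples, giving that $\scP_r$ is a vector space. For $\scP$, closure under addition of $f \in \scP_r$ and $g \in \scP_s$ (say $r \leq s$) uses Lemma \ref{lem_r_s} to upgrade $f$ to $\scP_s$ before summing. For the polynomial claim, I would check that each monomial $x_1^{k_1}\cdots x_N^{k_N}$ of total degree $r$ satisfies $|x_1^{k_1}\cdots x_N^{k_N}| \leq \|x\|_\infty^r$, then use equivalence of norms on $\real^N$ to get a bound $C \|x\|^r \leq C(\|x\|^r + 1)$; closure under linear combinations then promotes arbitrary polynomials into $\scP$.

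For the shift-invariance claim, the bound $|g(x)| = |f(x+y)| \leq H(\|x+y\|^r + 1) \leq H((\|x\| + \|y\|)^r + 1)$ is valid whenever $x + y \in \posint^N$, and trivially valid otherwise since $g(x) = 0$. I would then apply the elementary estimate $(a+b)^r \leq 2^{r-1}(a^r + b^r)$ for $a,b \geq 0$ with $a = \|x\|$, $b = \|y\|$ to obtain $|g(x)| \leq 2^{r-1}H(\|x\|^r + \|y\|^r + 2^{1-r})$, which is of the form $H'(\|x\|^r + 1)$ since $\|y\|$ is a constant. Hence $g \in \scP_r$.

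For the product claim, multiplying the bounds for $f$ and $g$ gives
\[
|f(x)g(x)| \leq H_1 H_2 (\|x\|^r + 1)(\|x\|^s + 1) = H_1 H_2 (\|x\|^{r+s} + \|x\|^r + \|x\|^s + 1).
\]
Applying Lemma \ref{lem_r_s} to dominate both $\|x\|^r$ and $\|x\|^s$ by a constant multiple of $\|x\|^{r+s}$ collapses this to a bound of the form $H'(\|x\|^{r+s} + 1)$, putting $fg$ in $\scP_{r+s}$. The only thing to double-check is the boundary behavior for small $\|x\|$ in the norm-equivalence step and in the application of Lemma \ref{lem_r_s}; both are handled precisely as in that lemma's proof by absorbing finitely many exceptional points into a larger constant.
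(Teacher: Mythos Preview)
Your proposal is correct and follows essentially the same approach as the paper: verify each claim directly from the defining inequality, using equivalence of norms for the monomial bound and Lemma~\ref{lem_r_s} to absorb lower-order terms in the shift and product estimates. The only cosmetic difference is that for shift invariance you invoke the convexity bound $(a+b)^r \leq 2^{r-1}(a^r+b^r)$ where the paper expands via the binomial theorem, and for monomials you bound by $\|x\|_\infty$ to the total degree rather than the maximal componentwise exponent; both variants work and neither constitutes a genuinely different route.
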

\begin{proof}
It is trivial to see that $\scP_r$ is a vector space. Given $f,g \in \scP$ 
by Lemma \ref{lem_r_s} there exists some $r \in \posint$ such that $f,g \in \scP_r$. Hence it is clear then that $\scP$ is a vector space as well. 

In order to show that all polynomials belong to $\scP$ it is adequate to show that all monomials $p(x)=x^\beta$ where $\beta=(\beta_1,\dots,\beta_N) \in \posint^N$ and 
\[
x^\beta = x_1^{\beta_1} \dots x_N^{\beta_N},
\]
belong to $\scP$. Indeed
\[
|x^\beta| \leq (\|x\|_{\infty}^{\beta_0}+1),\quad \forall x \in \posint^N,
\]
where $\beta_0=\max\{\beta_1,\dots,\beta_N\}$. Using equivalence of norms there exists $K$ independent of $x$ such that
\[
|x^\beta| \leq K (\|x\|^{\beta_0} + 1), \quad \forall x \in \posint^N.
\] 

To show shift invariance it is adequate to note that for $r \in \posint$ and $x,y \in \integ^N$
\[
\|x+y\|^r \leq (\|x\| + \|y\|)^r \leq \sum_{l=0}^r \frac{r!}{l!(r-l)!} \|x\|^l \|y\|^{r-l} \leq K_r (\|x\|^r +1),
\]
where $K_r$ depends on $y$, $r$ and is obtained in part by Lemma \ref{lem_r_s}. 
Finally if $f \in \scP_r$ and $g \in \scP_s$ and $h=fg$ then for some $H>0$ 
and some $H'>0$ independent of $x$ we have
\[
|f(x)g(x)| \leq H (\|x\|^r + 1)(\|x\|^s + 1) \leq H' (\|x\|^{r+s} +1),
\]
where we have used Lemma \ref{lem_r_s}.
\end{proof}

Equations \eqref{eq_Dynkin} and \eqref{eq_Dynkin_int} hold for $h$ that are constant outside a compact set. The following lemma shows that under suitable assumptions these equations hold for all $h \in \scP$. 

\begin{lemma}\label{lem_mom_smooth}
Let $r \in \nat$ and suppose that $E(\|X(t)\|^r)<\infty$ for all $t \geq 0$.
Then for every $f \in \scP_r$, $E(|f(X(t))|)<\infty$ 
for every $t \geq 0$ and $E(f(X(t)))$ is continuous in $t$ 
for $t \geq 0$. 

Suppose in addition that the propensity functions $a_j$ for $j=1,\dots,M$ all 
belong to class $\scP_{s}$ where $1 \leq s \leq r$. Then for each $f \in \scP_{r-s}$, $E(f(X(t)))$ is continuously differentiable in $t$ for $t \geq 0$ and \eqref{eq_Dynkin}, \eqref{eq_Dynkin_int} hold with $h=f$.
\end{lemma}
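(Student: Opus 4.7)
For the first assertion, integrability is immediate: since $|f(x)| \leq H(\|x\|^r + 1)$, we get $E(|f(X(t))|) \leq H(E(\|X(t)\|^r) + 1) < \infty$. For continuity of $t \mapsto E(f(X(t)))$, I would approximate $f$ by the truncations $f_n(x) = f(x)\,1_{\{\|x\|\leq n\}}$, each of which has finite support. Classical theory of continuous time Markov chains on a countable state space gives that $t \mapsto P(X(t)=y)$ is continuous (in fact $C^\infty$) in $t$ for every fixed $y \in \posint^N$, so $E(f_n(X(t)))$, being a finite sum of such continuous functions, is continuous. To transfer continuity to $E(f(X(t)))$ one needs $E(f_n(X(\cdot))) \to E(f(X(\cdot)))$ uniformly on compact time intervals, which reduces to uniform integrability of $\{\|X(t)\|^r : t \in [0,T]\}$. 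This can be obtained either pathwise---dominating $\|X(t)\|$ on $[0,T]$ by $\|X(0)\| + (\max_j \|\nu_j\|)\, N(T)$ where $N(T)$ is the total jump count---or by an $\epsilon/\delta$ argument at each fixed $t_0$: right continuity of paths together with $P(X(t_0^-) \neq X(t_0)) = 0$ (jump times are absolutely continuous) yields $f(X(t_k)) \to f(X(t_0))$ a.s.\ for any $t_k \to t_0$, and one controls the tails $E(\|X(t_k)\|^r\, 1_{\{\|X(t_k)\|>K\}})$ uniformly via the Markov property and the finite moment at $t_0$.

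\textbf{Second assertion.} Under the additional hypothesis on propensities, the shift invariance and product closure in Lemma~\ref{lem_classP} give $\sA f \in \scP_r$, since $f(\,\cdot+\nu_j)-f(\cdot) \in \scP_{r-s}$ and $a_j \in \scP_s$. Hence $E(|(\sA f)(X(t))|) < \infty$ at each $t$, and the first assertion applied to $\sA f$ makes $t \mapsto E((\sA f)(X(t)))$ continuous. Apply \eqref{eq_Dynkin_int} to the bounded, finitely supported $f_n(x) = f(x)\, 1_{\{\|x\| \leq n\}}$ (which lies in the stated domain of $\sA$), obtaining $E(f_n(X(t))) = E(f_n(X(0))) + \int_0^t E((\sA f_n)(X(s)))\,ds$. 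Let $n \to \infty$: pointwise $f_n(x) \to f(x)$ and $(\sA f_n)(x) \to (\sA f)(x)$ for each $x$, while the uniform majorant $|(\sA f_n)(x)| \leq \sum_j (|f(x+\nu_j)|+|f(x)|) a_j(x) =: g(x) \in \scP_r$ satisfies $s \mapsto E(g(X(s)))$ continuous (by the first assertion) hence bounded on $[0,t]$. Dominated convergence applied on both sides yields \eqref{eq_Dynkin_int} for $f$; the differential form \eqref{eq_Dynkin} then follows from continuity of $s \mapsto E((\sA f)(X(s)))$ together with the fundamental theorem of calculus.

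\textbf{Main obstacle.} The genuinely delicate step is the continuity in the first assertion: pointwise finiteness $E(\|X(t)\|^r) < \infty$ for each $t$ does not by itself promote continuity from the truncations $E(f_n(X(\cdot)))$ to $E(f(X(\cdot)))$, and some form of tightness or uniform integrability of $\{\|X(t)\|^r\}$ over compact time windows must be extracted from the jump structure of the process. Once that is in hand, the rest of both assertions reduces to a routine truncation plus dominated convergence argument.
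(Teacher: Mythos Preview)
Your treatment of the second assertion is essentially the paper's: truncate $f$, bound $|\sA f_n|$ uniformly in $n$ by the same dominating function $g(x)=\sum_j(|f(x+\nu_j)|+|f(x)|)a_j(x)\in\scP_r^+$, note that $s\mapsto E(g(X(s)))$ is continuous (hence integrable on $[0,t]$) by the first assertion, and pass to the limit by dominated convergence. The paper also reduces first to $f\geq 0$ and truncates via $f^K=f\wedge K$ rather than by indicators, but that is cosmetic here.

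For the first assertion the paper takes a different and cleaner route. It reduces to non-negative $f$ via $f=f^+-f^-$ and then uses the \emph{monotone} truncation $f^K=f\wedge K$. Each $f^K$ is constant outside a compact set, so Dynkin's formula applies and $t\mapsto E(f^K(X(t)))$ is differentiable, hence continuous; and since $f^K\uparrow f$, monotone convergence gives $E(f^K(X(t)))\uparrow E(f(X(t)))$ pointwise in $t$. The paper then appeals to ``Dini's theorem and a standard argument'' for continuity of the limit. The payoff is that monotonicity hands you lower semicontinuity of $t\mapsto E(f(X(t)))$ for free, so one is not forced to manufacture uniform integrability from scratch. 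By contrast, your two proposed UI mechanisms are not solid as stated: the pathwise bound via the total jump count $N(T)$ would need $E(N(T)^r)<\infty$, which is \emph{not} implied by finiteness of the moments of $X(t)$ alone (the rates $a_j$ can be arbitrarily large even when $X$ stays bounded in moments), and the Markov-property tail control is left as a sketch. The positive/negative decomposition together with monotone truncation is the missing idea that dissolves the obstacle you flagged.
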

\begin{proof}  
Given $f \in \scP_r$, the claim $E(|f(X(t))|)<\infty$ is obvious under the assumptions. 

To show $E(f(X(t)))$ is continuous in $t$ we first consider $f \in \scP_r^+$. For each $K>0$ define $f^K:\posint^N \to \real$ 
by $f^K(x) = f(x) \wedge K$, where $a \wedge b$ denotes the minimum of $a$ and $b$.
 Since for each $K$, $f^K$ is constant outside a compact set, Dynkin's 
formula \eqref{eq_Dynkin} (with $h=f^K$) holds showing $E(f^K(X(t)))$ to 
be differentiable and hence continuous in $t$. Since $f^K \uparrow f$ as $K \uparrow \infty$, by monotone convergence $E(f^K(X(t))) \uparrow E(f(X(t)))$ as $K \uparrow \infty$. 
Hence Dini's theorem and a standard argument show that 
$E(f(X(t)))$ is continuous in $t$ for $t \geq 0$. For $f \in \scP_r$ the proof is completed by decomposing $f$ into its positive and negative parts, $f = f^+ - f^-$.  
Thus we have established that for every $f \in \scP_r$, 
$E(f(X(t)))$ is continuous in $t$ for $t \geq 0$. 

To show the second part
% $E(f(X(t)))$ is continuously differentiable in $t$ 
%under the assumption that $a_j \in \scP$ for $j=1,\dots,M$, 
we consider $f \in \scP_{r-s}^+$ and for $K>0$ we consider  
the integral equation \eqref{eq_Dynkin_int} with $h=f^K$. 
We observe that since $a_j \in \scP_s$, if $f \in \scP_{r-s}$ then $\sA (f^K) \in \scP_r$, $\sA f \in \scP_r$ and that
 $f^K(X(t,\omega)) \to f(X(t,\omega))$ for almost all $(t,\omega)$ as 
$K \uparrow \infty$ where the Lebesgue measure is used for $t \geq 0$.
 Next we bound $\sA (f^K)$ as
\[
|(\sA f^K)(x)| \leq \sum_{j=1}^M f^K(x+\nu_j) a_j(x) + \sum_{j=1}^M f^K(x) a_j(x) \leq g(x),
\]
where
\[
g(x) = \sum_{j=1}^M f(x+\nu_j) a_j(x) + \sum_{j=1}^M f(x) a_j(x), 
\]
and we also observe that $g \in \scP_r^+$. Thus $E(g(X(t)))$ is finite and continuous in $t$ and thus $\int_0^t E(g(X(s))) ds < \infty$ for each $t \geq 0$. 
Hence the dominated convergence theorem allows us to conclude that one could take the limit as $K \to \infty$ on both sides of \eqref{eq_Dynkin_int} with $h=f^K$ 
to conclude that the equation holds for $h=f \in \scP_{r-s}^+$ with all terms being finite. This shows $E(f(X(t)))$ to be continuously differentiable in $t$ for $f \in \scP_{r-s}^+$. The proof is completed for $f \in \scP_{r-s}$ by decomposing $f$ into positive and negative parts.

\end{proof}

\begin{lemma}\label{lem-growth-phi}
Suppose $\phi:[0,\infty) \to \real$ is strictly positive for all $t \geq 0$, 
differentiable at $0$ and suppose there exist $H>0$ and $\lambda \in \real$ such that for all $t \geq 0$,
\[
\phi(t) \leq H \phi(0) e^{\lambda t}.
\]
Then there exists $\mu \in \real$ such that for all $t \geq 0$,
\[
\phi(t) \leq \phi(0) e^{\mu t}.
\]
\end{lemma}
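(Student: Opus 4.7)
The plan is to show that the factor $H$ in the given bound can be absorbed into the exponent at the cost of a larger rate, by splitting the interval $[0,\infty)$ into a small neighborhood of $0$ (handled via differentiability) and its complement (handled via the given bound, since $\log H/t$ is then bounded).

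First, observe that setting $t=0$ in the hypothesis yields $\phi(0) \leq H \phi(0)$, so $H \geq 1$ (using $\phi(0) > 0$). It is convenient to pass to the logarithm. Define $\psi(t) = \log(\phi(t)/\phi(0))$, which is well-defined since $\phi(t) > 0$ for all $t \geq 0$. Then $\psi(0) = 0$, and the hypothesis becomes $\psi(t) \leq \log H + \lambda t$, while the conclusion we seek is $\psi(t) \leq \mu t$ for some real $\mu$.

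Next, I would handle small $t$ using the differentiability of $\phi$ at $0$. Since $\phi$ is differentiable at $0$ with $\phi(0) > 0$, the function $\psi$ is also differentiable at $0$, with $\psi'(0) = \phi'(0)/\phi(0)$. Hence there exist $\delta > 0$ and a constant $K > 0$ (depending on $\phi'(0)/\phi(0)$) such that
\[
\psi(t) \leq K t \quad \text{for all } t \in [0,\delta].
\]
Equivalently, using $1+Kt \leq e^{Kt}$, one may verify $\phi(t) \leq \phi(0) e^{Kt}$ on $[0,\delta]$ directly from the definition of $\phi'(0)$.

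For $t \geq \delta$ I would use the given bound. We have
\[
\psi(t) \leq \log H + \lambda t = \left( \frac{\log H}{t} + \lambda \right) t \leq \left( \frac{\log H}{\delta} + \lambda \right) t,
\]
where the last inequality uses $\log H \geq 0$ and $t \geq \delta$. Setting
\[
\mu = \max\!\left( K, \; \lambda + \frac{\log H}{\delta} \right),
\]
we obtain $\psi(t) \leq \mu t$ on both $[0,\delta]$ and $[\delta,\infty)$, which is the desired conclusion. No step here is a serious obstacle; the only subtlety is remembering that the factor $H$ only causes trouble near $t = 0$, which is precisely where the differentiability hypothesis is used to replace the rough bound $\log H$ by a linear bound in $t$.
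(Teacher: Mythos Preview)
Your proof is correct and follows essentially the same approach as the paper: both arguments bound the quotient $\dfrac{\ln\phi(t)-\ln\phi(0)}{t}$ by using differentiability at $0$ for small $t$ and the hypothesis (which gives the bound $\ln H/t + \lambda$) for $t$ bounded away from $0$. The paper simply sets $\mu$ equal to the supremum of this quotient, whereas you construct an explicit upper bound via the two-piece split; the content is the same.
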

\begin{proof}
We observe that for $t > 0$,
\[
\frac{\ln(\phi(t)) - \ln(\phi(0))}{t} \leq \frac{\ln(H)}{t} + \lambda.
\] 
We note that the right hand side is bounded for $t \geq t_0$ for every $t_0>0$,
and the left hand side is bounded for $t \in (0,t_0]$ for some $t_0>0$ since the limit as $t \to 0+$ exists and is finite by assumption. Hence the left hand side is bounded for $t \in (0,\infty)$. We set
\[
\mu = \sup_{t > 0} \Big\{\frac{\ln(\phi(t)) - \ln(\phi(0))}{t}\Big\} < \infty,
\]
to obtain the result.
\end{proof}

The following theorem provides a sufficient condition for exponential moment growth bounds. 

\begin{theorem}\label{thm1_mom_bnd}
Let $\nu^c$ be defined as above and suppose propensity functions belong to class $\scP$. 
Suppose further that there exists $\alpha \in \posint^{N_c}$ such that $\alpha>0$ and 
$\alpha^T \nu^c \leq 0$. Then for each $r \in \nat$ there 
exists $\mu_r$ such that the following holds for all $t \geq 0$ and in any norm $\|.\|$ on $\real^N$:
\begin{equation}\label{eqn_mom_bnd}
E(\|X(t)\|^r) \leq E(\|X(0)\|^r) e^{\mu_r t} + e^{\mu_r t} - 1.
\end{equation}
\end{theorem}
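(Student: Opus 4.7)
The plan is to build a size function adapted to the critical species, control its generator along trajectories, localize to obtain moment bounds via a Gronwall argument, and finally invoke Lemma \ref{lem-growth-phi} to rewrite the estimate in the precise exponential form stated. Throughout it suffices to consider a deterministic initial condition $X(0)=x_0$.

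Set $\|y\|_\alpha := \sum_{i=1}^{N_c}\alpha_i y_i$ and define $V(x) := (\|y\|_\alpha + 1)^r$ for $x=(y,z)$. For any critical reaction $j \leq M_c$, the change in $\|y\|_\alpha$ equals $\alpha^T\nu^c_j \leq 0$ and the post-jump $y$ remains in $\posint^{N_c}$, so $V(x+\nu_j) \leq V(x)$; these terms contribute nothing positive to $\sA V$. For non-critical $j > M_c$, the mean-value theorem gives $V(x+\nu_j) - V(x) \leq C_r(\|y\|_\alpha^{r-1}+1)$, with $C_r$ depending only on $r$ and on $B := \max_{j > M_c}|\alpha^T\nu^1_j|$. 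The non-criticality condition $a_j(x) \leq H_j(z)(\|y\|+1)$, combined with the stoichiometric boundedness of the non-critical species (forcing $Z(t) \in \pi_z(\sS_{x_0})$, a finite set, almost surely by Theorem \ref{thm_bdd_sbdd}), yields $H_j(Z(t)) \leq \bar H_j$ pathwise, and hence $a_j(X(t)) \leq C(\|Y(t)\|_\alpha+1)$ almost surely. Multiplying and summing produces the Lyapunov inequality $(\sA V)(X(t)) \leq \lambda_r(V(X(t))+1)$ a.s.\ for some $\lambda_r = \lambda_r(x_0) > 0$.

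Next, I localize by $\tau_N := \inf\{t : V(X(t)) \geq N\}$. The stopped process takes values in a bounded set, so Dynkin's formula applies; expectation, Fubini and the Lyapunov bound yield an integral inequality for $\phi_N(t) := E(V(X(t\wedge\tau_N)))$, and Gronwall delivers $\phi_N(t)+1 \leq (V(x_0)+1)e^{\lambda_r t}$. Markov's inequality then gives $N P(\tau_N < t) \leq \phi_N(t)$, uniformly bounded in $N$, so $\tau_N \to \infty$ almost surely (non-explosion), and Fatou produces $E(V(X(t))) \leq V(x_0)e^{\lambda_r t} + e^{\lambda_r t} - 1$. Combining this with the pathwise bound on $\|Z(t)\|$ and norm equivalence on $\real^N$, I obtain, for any norm $\|\cdot\|$, an estimate of the shape $E(\|X(t)\|^r) + 1 \leq H(E(\|X(0)\|^r)+1)e^{\lambda_r t}$, with a finite $H$ depending on the norm and $x_0$.

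Finally, to absorb $H$ and reach the exact form claimed, I apply Lemma \ref{lem-growth-phi} to $\phi(t) := E(\|X(t)\|^r)+1$. Strict positivity is clear; differentiability of $\phi$ at $t=0$ follows from Lemma \ref{lem_mom_smooth}, which requires finiteness of $E(\|X(t)\|^{r+s})$ (with $s$ chosen so that $a_j \in \scP_s$), and this is itself supplied by rerunning the preceding localization-Gronwall argument at order $r+s$ in place of $r$. I expect the main technical obstacle to be the Lyapunov estimate: one must carefully verify that the $O(\|y\|_\alpha^{r-1})$ jump of $V$ on non-critical reactions, multiplied by the at-most-linear propensity bound on those reactions, really closes into $O(V+1)$, and that the coupling through the auxiliary variable $z$ via $H_j(z)$ is absorbed cleanly by the pathwise stoichiometric bound on the non-critical species.
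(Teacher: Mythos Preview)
Your proposal is correct and follows the same overall strategy as the paper: identify a size function that is nonincreasing along critical reactions, bound the contribution of non-critical reactions by exploiting their at-most-linear growth in the critical species together with the stoichiometric boundedness of the non-critical species, localize, apply Gronwall, and finish with Lemmas~\ref{lem_mom_smooth} and~\ref{lem-growth-phi}.

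The two proofs differ in two technical choices. First, the paper does not work with a Lyapunov function of the critical species alone; instead it invokes Corollary~\ref{cor_stoichbdd} to produce $\beta\in\posint^N$ with $\beta_2>0$ and $\beta^T\nu\le 0$, sets $\gamma=(\beta_1+\alpha,\beta_2)$, and uses the full weighted $\ell^1$-norm $\|x\|=\gamma^T|x|$ on $\real^N$, so that for critical $j$ one has $\|x+\nu_j\|\le\|x\|$ directly. Your route, keeping $V(x)=(\|y\|_\alpha+1)^r$ and absorbing the $z$-dependence purely through the pathwise bound $Z(t)\in\pi_z(\sS_{x_0})$, is equally valid and arguably makes clearer where each hypothesis is used, at the cost of constants that depend on $x_0$ through the range of $z$ (the paper's constants carry the same hidden dependence). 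Second, the paper localizes by truncated test functions $f_r^K(x)=\|x\|^r\wedge K$, which are constant outside a compact set and hence in the domain of $\sA$, derives the Gronwall inequality at level $K$, and passes to the limit via monotone convergence; you localize by hitting times $\tau_N$ and pass to the limit via Fatou after establishing non-explosion. Both are standard and lead to the same bound before the final appeal to Lemma~\ref{lem-growth-phi}.
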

\begin{proof}
First we claim that there exists $\gamma = (\gamma_1,\gamma_2) \in \posint^N$ 
where $\gamma_1 \in \posint^{N_c}$ and $\gamma_2 \in \posint^{N-N_c}$ such that
$\gamma>0$, $\gamma_1^T \nu^1_j \leq 0$ for $j=1,\dots,M_c$ and $\gamma_2^T \nu^2_j \leq 0$  for $j=1,\dots,M$ where $\nu^1,\nu^2$ are as defined earlier. To see this we first observe that by Corollary \ref{cor_stoichbdd} 
there exists $\beta=(\beta_1,\beta_2) \in \posint^N$ where $\beta_1 \in \posint^{N_c}$ and $\beta_2 \in \posint^{N-N_c}$ such that $\beta_1^T \nu^1 \leq 0$, 
$\beta_2^T \nu^2 \leq 0$ and $\beta_2 >0$. Set $\gamma_1 = \beta_1 + \alpha$ 
and $\gamma_2 = \beta_2$ to obtain the desired result. 

We shall use the norm defined by 
$\|x\| = \gamma^T |x|$ (where $|x| = (|x_1|,|x_2|,\dots,|x_N|)$) and for $x \geq 0$ we have that $\|x\| = \gamma^T x$. 
 For $r \in \nat$ and $K>0$ define $f_r,f_r^K:\posint^N \to \real$ by
\[
f_r(x) = \|x\|^r,  \quad f^K_r(x) = \|x\|^r \wedge K.
\]
%where $a \wedge b$ stands for the minimum of $a$ and $b$.
%where $x=(y,z)$ with $y \in \posint^{N_c}$ denoting the vector of critical species counts. 
We shall define $f_r,f_r^K$ to be zero outside $\posint^N$. 
It follows that $f^K_r$ are constant outside a compact set  for each $K>0$ (and hence in the domain of the generator $\sA$) and $f^K_r \uparrow f_r$ as $K \uparrow \infty$. 

We write
$(\sA f^K_r)(x) = \sum_{j=1}^M T_j$
where 
\[
T_j = [f^K_r(x+\nu_j) - f^K_r(x)]a_j(x).
\] 
We note that $a_j(x)=0$ and hence $T_j=0$, unless $x \in \posint^N$ and $x+\nu_j \in \posint^N$.
Since we seek a non-negative upper bound for $T_j$ we shall only consider the case when $x \in \posint^N$ and $x+\nu_j \in \posint^N$.

When $j=1,\dots,M_c$, due to the choice of our norm, we obtain that for $x=(y,z)$ 
\[
\|x + \nu_j\| = \gamma_1^T y + \gamma_2^T z + \gamma_1^T \nu^1_j + \gamma_2^T \nu^2_j \leq \gamma_1^T y + \gamma_2^T z = \|x\|.
\]
Given this, we obtain that $T_j \leq 0$ regardless of the value of $K$. 

To bound $T_j$ for $j=M_c+1,\dots,M$, we consider the ordering of the three terms $\|x+\nu_j\|^r, \|x\|^r$ and $K$. If $\|x\|^r > K$ then regardless of the value of $\|x+\nu_j\|$ we obtain that
\[
T_j \leq 0.
\]
If $\|x\|^r \leq K$ then regardless of the value of $\|x+\nu_j\|^r$ we obtain that
\[
\begin{aligned}
T_j &\leq [ \|x+\nu_j\|^r - \|x\|^r] a_j(x) 
   = [(\|x\| + \gamma_1^T \nu^1_j + \gamma_2^T \nu^2_j)^r - \|x\|^r] a_j(x)\\
&\leq  [(\|x\| + \gamma_1^T \nu^1_j)^r - \|x\|^r] a_j(x)
 \leq H(z) \left( \sum_{l=0}^{r-1} \frac{r!}{l!(r-l)!} \|x\|^l (\gamma_1^T \nu^1_j)^{r-l} \right) (\|y\|+1)\\
&\leq H(z) \lambda'_r(\|x\|^r+1) = H(z) \lambda'_r(\|x\|^r \wedge K +1),
\end{aligned}
\]
where $\lambda'_r$ is a constant that does not depend on $x$ or $K$ and the Lemma \ref{lem_r_s} has been used. 
On account of positivity of the above upper bound, it provides an upper bound for $T_j$ when $j=M_c+1,\dots,M$ regardless of whether $\|x\|^r \leq K$ or not. 

Thus we obtain the bound 
\[
(\sA f^K_r)(x) \leq (M-M_c) \lambda'_r H(z) f_r^K(x) + (M-M_c) \lambda'_r H(z).
\]
Hence we obtain
\[
\frac{d}{dt} E(f_r^K(X(t))) \leq (M-M_c) \lambda'_r E[H(Z(t)) f_r^K(X(t))] + (M-M_c) \lambda'_r E[H(Z(t))].
\]
Using the fact that the vector copy number $Z(t)$ of the non-critical species is bounded, we obtain that
\[
\frac{d}{dt} E(f_r^K(X(t))) \leq \lambda_r E(f_r^K(X(t))) + \lambda_r,
\]
where $\lambda_r$ is another constant. 
The Gronwall Lemma yields that
\[
E(f_r^K(X(t))) \leq E(f_r^K(X(0))) e^{\lambda_r t} + e^{\lambda_r t} -1, \quad t \geq 0.
\]
Taking limit as $K \uparrow \infty$ and using the monotone convergence theorem we obtain
\[
E(f_r(X(t))) \leq E(f_r(X(0))) e^{\lambda_r t} + e^{\lambda_r t} -1, \quad t \geq 0.
\]
Hence in the specific norm $\|x\|=\gamma^T |x|$ we obtain
\[
E(\|X(t)\|^r) \leq E(\|X(0)\|^r) e^{\lambda_r t} + e^{\lambda_r t} -1, \quad t \geq 0.
\]
Using the equivalence of norms in $\real^N$ we obtain the bound (in any given norm)
\[
 E(\|X(t)\|^r) \leq L_r \, E(\|X(0)\|^r) e^{\lambda_r t} + L_r \, e^{\lambda_r t} - L_r, \quad t \geq 0,
\] 
where $L_r$ is a constant that depends only on the norm used and on $r$ and
%If $E(\|X(t)\|^r)=0$ for some $t>0$ then it by Lemma \ref{lem-exp-zero} the result follows. Otherwise $E(\|X(t)\|^r)\neq 0$ for all $t >0$. We note that 
considering $t=0$ it is clear that $L_r \geq 1$. Define $\phi(t)$ by 
\[
\phi(t) = E(\|X(t)\|^r) + 1, \quad t \geq 0.
\]
Then $\phi(t) >0$ for $t \geq 0$ and it follows that 
\[
\phi(t) \leq L_r \phi(0) e^{\lambda_r t} + (1 - L_r) \leq L_r \phi(0) e^{\lambda_r t}.
\]
By Lemma \ref{lem_mom_smooth} it is clear that $\phi$ is continuously differentiable in $t$ for $t \geq 0$. Lemma \ref{lem-growth-phi} clinches the desired result.

\end{proof}

{\bf Example 2} Consider the system with two species and two reactions 
given by $\nu_1 = (2,-1)^T$, $\nu_2=(-1,1)^T$ 
and $a_1(x) = x_2^2$ and $a_2(x) = x_1$. Since $2 \nu_1 + 3 \nu_2 = (1,1)^T$ 
it is clear that both species are critical (as they are stoichiometricaly unbounded). However only reaction $1$ is critical. Thus the critical stoichiometric matrix is the column vector $\nu_1$. The choice of $\gamma=(1,3)^T$ satisfies $\gamma^T \nu_1 =-1 <0$ and hence we can conclude that the moments of all orders exist and satisfy the 
exponential in time growth bound.  

The conditions of Theorem \ref{thm1_mom_bnd} are not necessary to ensure that a moment growth bound of the form \eqref{eqn_mom_bnd} holds.  

{\bf Example 3} Consider a birth/death process with birth rate $a_1(x) = x^m$ and death rate $a_2(x)=2x^m$. Then $\nu_1=1$ and $\nu_2=-1$ and the critical 
matrix $\nu^c = (1, -1)$. The conditions of Theorem \ref{thm1_mom_bnd} are not met if $m >1$.  Nevertheless, intuitively one expects the birth rate to be 
compensated by the death rate of the same form but of a dominant magnitude. If we set $f_r(x) = x^r$ then
\[
\begin{aligned}
(\sA f_r)(x) &= \left((x+1)^r - x^r) x^m\right) + 2 \left((x-1)^r - x^r) x^m\right)\\
&=  \sum_{l=0}^{r-1} \frac{r!}{l! (r-l)!} x^{l+m} + 2 \sum_{l=0}^{r-1} \frac{r!}{l! (r-l)!} (-1)^{r-l} x^{l+m}\\
&= (-r x^{r+m-1} + \frac{3 r(r-1)}{2} x^{r+m-2} + \dots). 
\end{aligned}
\]
When $m=2$ (quadratic birth/death rates) the positive term with highest power of $x$ is $3 r(r-1) x^{r}/2 $ and suitable truncation and Gronwall Lemma 
may be used to obtain an exponential growth bound on all moments. If $m>2$ then  the positive term  $3r(r-1) x^{r+m-2}/2$ is a higher power than $x^r$ 
and unless $r=1$ (in which case finiteness can be shown easily regardless of $m$) this approach does not work. 

Thus the intuition suggested above may only be valid if the propensities are quadratic at most. 

{\bf Example 4} Consider a two species ($S_1$ and $S_2$) model where when one $S_1$ and one $S_2$ come together one of three things can happen; the birth of an $S_1$, the birth of an $S_2$ or the death of both $S_1$ and $S_2$. This may be depicted by   
\[
S_1 + S_2 \to 2 S_1 + S_2, \quad S_1+ S_2 \to S_1 + 2 S_2, \quad S_1 + S_2 \to 0.
\]
Thus we have $\nu_1=(1,0)^T, \nu_2=(0,1)^T$ and $\nu_3=(-1,-1)^T$. It is easy to see that both species are critical. 

Suppose the propensities $a_1,a_2,a_3$ for these reactions are given by
\[
a_1(x) = a_2(x) = x_1 x_2, \quad a_3(x) = 2 x_1 x_2.
\]
Hence all three reactions are critical. 
Here again the conditions of Theorem \ref{thm1_mom_bnd} are not met. However the fact that one occurrence of the third reaction undoes one occurrence of both of the other two and the dominant rate of the third reaction might suggest the possibility of bounded moments. 

Let us use the $1$-norm and set $f_r(x) = \|x\|^r$. We obtain that
\[
(\sA f_r)(x) = [(y+1)^r - y^r] a_1(x) + [(y+1)^r - y^r] a_2(x) + [(y-2)^r - y^r] a_3(x),
\]
where $y=x_1+x_2$ and we suppose $x=(x_1,x_2) \geq 0$. Setting $a_1=a_2=a$ and $a_3=2a$ and simplifying we obtain
\[
\begin{aligned}
(\sA f_r)(x) &=  2a(x) \sum_{l=0}^{r-1} \frac{r!}{l! (r-l)!} y^l + 2a(x) \sum_{l=0}^{r-1} \frac{r!}{l! (r-l)!} (-2)^{r-l} y^l\\
&= (-r y^{r-1} a(x) + 5 r(r-1) y^{r-2} a(x) + \dots). 
\end{aligned}
\]
Since $a(x) = x_1 x_2 \leq \|x\|^2 = y^2$, similar to the $m=2$ case in Example 2, we may expect to obtain exponential growth bounds on all moments. However if $a(x)$ did not satisfy quadratic growth bound such bounds may not hold. 

The two examples above are examples of application of the following theorem.

\begin{theorem}
\label{thm2} Suppose that propensity functions all belong to class $\scP$ and
that there exist $C>0$ and $\gamma \in \real^N$ such that $\gamma>0$ and
\[
\gamma^T F(x) \leq  C (\|x\|+1), \quad \forall x \in \posint^N,
\]
where $F(x) = \sum_{j=1}^M \nu_j a_j(x)$. 
Further suppose there exists $H>0$ such that for all $j$ with $\gamma^T \nu_j \neq 0$, 
\[
a_j(x) \leq H (\|x\|^2 + 1).
\]
Then for each $r \in \nat$ there 
exists $\mu_r$ such that equation \eqref{eqn_mom_bnd} holds.   

\end{theorem}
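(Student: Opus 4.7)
\noindent\textbf{Proof proposal for Theorem \ref{thm2}.} The plan is to mimic the strategy of Theorem \ref{thm1_mom_bnd}: use the linear weight function $\gamma$ to build an equivalent norm, compute the generator acting on the truncated $r$th power, and extract a bound of the form $(\sA f_r^K)(x) \le C_r(f_r^K(x)+1)$ so that Gronwall and monotone convergence close the argument. Specifically, introduce the norm $\|x\| = \gamma^T|x|$, which on $\posint^N$ equals $\gamma^T x$, and for $x \in \posint^N$ with $x+\nu_j \in \posint^N$ observe the key identity $\|x+\nu_j\|-\|x\| = \gamma^T \nu_j =: \delta_j$. Set $f_r(x)=\|x\|^r$ and $f_r^K(x) = \|x\|^r \wedge K$ (zero outside $\posint^N$); exactly as in the proof of Theorem \ref{thm1_mom_bnd}, a case analysis on $\|x\|^r$ versus $K$ yields $(\sA f_r^K)(x) \le (\sA f_r)(x)$ when $\|x\|^r \le K$ and $(\sA f_r^K)(x)\le 0$ otherwise, so it suffices to control $(\sA f_r)(x)$ pointwise by $C_r(\|x\|^r+1)$.

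\noindent Next I would expand the generator using the binomial theorem applied to the identity above:
\[
(\sA f_r)(x) = \sum_{j=1}^{M}\!\bigl[(\|x\|+\delta_j)^r - \|x\|^r\bigr]a_j(x) = r\|x\|^{r-1}\gamma^T F(x) + \sum_{j=1}^{M}\sum_{l=0}^{r-2}\binom{r}{l}\|x\|^l \delta_j^{\,r-l}\,a_j(x).
\]
The leading term is where the structural hypothesis enters: by $\gamma^T F(x)\le C(\|x\|+1)$ and Lemma \ref{lem_r_s}, it is dominated by $C_1(\|x\|^r+1)$. For the remainder, the crucial observation is that only indices $j$ with $\delta_j\ne 0$ contribute, and for those indices the quadratic growth hypothesis $a_j(x)\le H(\|x\|^2+1)$ applies; since every summand carries $\|x\|^l$ with $l\le r-2$, each is bounded by a constant times $\|x\|^{l+2}+\|x\|^l \le C_2(\|x\|^r+1)$ via Lemma \ref{lem_r_s}. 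Combining these gives $(\sA f_r)(x)\le C_r(\|x\|^r+1)$ and hence $(\sA f_r^K)(x)\le C_r(f_r^K(x)+1)$ uniformly in $K$.

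\noindent Since each $f_r^K$ is constant outside a compact set, Dynkin's formula \eqref{eq_Dynkin} applies and gives $\frac{d}{dt}E(f_r^K(X(t))) \le C_r E(f_r^K(X(t))) + C_r$. Setting $z(t)=E(f_r^K(X(t)))+1$ and applying Gronwall yields $E(f_r^K(X(t)))\le E(f_r^K(X(0)))e^{C_r t}+e^{C_r t}-1$. Monotone convergence as $K\uparrow\infty$ gives the same bound for $E(\|X(t)\|^r)$ in the $\gamma$-norm, and then equivalence of norms on $\real^N$ together with Lemma \ref{lem-growth-phi} (after verifying continuous differentiability via Lemma \ref{lem_mom_smooth}, which is available since the propensities lie in $\scP$ and the moments are now known to be finite) upgrades the bound to an estimate of the form \eqref{eqn_mom_bnd} in an arbitrary norm.

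\noindent The main obstacle is the second step: without some extra hypothesis, the subleading binomial terms $\|x\|^l\delta_j^{r-l}a_j(x)$ could grow like $\|x\|^{l+\deg a_j}$ with total degree exceeding $r$, spoiling the Gronwall estimate. The quadratic assumption on $a_j$ for indices with $\delta_j\ne 0$ is exactly what is needed to keep every subleading degree at most $r$, while indices with $\delta_j=0$ are harmless regardless of their propensity degree. Identifying and exploiting this split is the conceptual heart of the proof; the rest is routine truncation and Gronwall bookkeeping as in Theorem \ref{thm1_mom_bnd}.
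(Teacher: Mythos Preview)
Your proposal is correct and follows essentially the same approach as the paper: choose the weighted norm $\|x\|=\gamma^T|x|$, expand the generator on the truncated $r$th power via the binomial theorem, use the drift hypothesis $\gamma^T F(x)\le C(\|x\|+1)$ to control the leading $r\|x\|^{r-1}\gamma^T F(x)$ term and the quadratic bound on $a_j$ (for $\delta_j\ne 0$) to control the lower-order terms, then close with Gronwall, monotone convergence, and the norm-equivalence/Lemma~\ref{lem-growth-phi} step from Theorem~\ref{thm1_mom_bnd}. The paper's own proof is the same argument with the same decomposition; the only cosmetic difference is that the paper bounds each $T_j$ separately before summing, whereas you sum first and then split off the leading term.
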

\begin{proof}
Define $f_r, f^K_r$ as in the proof of Theorem \ref{thm1_mom_bnd}. 
We write
$(\sA f^K_r)(x) = \sum_{j=1}^M T_j$
where 
\[
T_j = [f^K_r(x+\nu_j) - f^K_r(x)]a_j(x).
\] 
We choose the norm defined by $\|x\| = \gamma^T |x|$ where $|x| = (|x_1|,|x_2|,\dots,|x_N|)$. When $\|x\|^r > K$ we have that $T_j \leq 0$.  
When $\|x\|^r \leq K$, we obtain 
\[
\begin{aligned}
T_j &\leq  [(\gamma^T x + \gamma^T \nu_j)^r - (\gamma^T x)^r] a_j(x)\\
&= r (\gamma^T x)^{r-1} \gamma^T \nu_j a_j(x) + \sum_{l=0}^{r-2} \frac{r!}{l! (r-l)!} (\gamma^T \nu_j)^{r-l} (\gamma^T x)^l a_j(x) \\
&\leq  r (\gamma^T x)^{r-1} \gamma^T \nu_j a_j(x) + H'_r (\|x\|^r  +1),\\
&=  r (\gamma^T x)^{r-1} \gamma^T \nu_j a_j(x) + H'_r (\|x\|^r \wedge K +1),
\end{aligned}
\]
where $H'_r$ is a suitable constant. We note that for some $j$ if $\gamma^T \nu_j=0$ then there are no conditions on the form of the propensity function $a_j$.   
Otherwise the quadratic growth bound on $a_j$ ensures that an upper bound 
with highest power of at most $\|x\|^r$ is obtained. This leads to a bound of the form
\[
(\sA f_r^K)(x) \leq \lambda_r f_r^K + \lambda_r,
\]
where $\lambda_r>0$ is a suitable constant. The rest of the proof is similar to that of Theorem \ref{thm1_mom_bnd}.
\end{proof}

The next theorem provides a necessary condition for the boundedness of all moments for all time $t \geq 0$. 

\begin{theorem}
\label{thm3}
Suppose propensity functions all belong to $\scP$ and 
suppose that there exist $\gamma \in \real^N$, $\alpha >1$ and $C>0$ such that 
$\gamma>0$ and
\[
\gamma^T F(x) \geq C \|x\|^\alpha, \quad \forall x \in \posint^N,
\]
where $F(x) = \sum_{j=1}^M \nu_j a_j(x)$. Further suppose that $0 \in \posint^N$ is not 
both the initial and an absorbing state.  Then for every $r \in \nat$ 
that satisfies $a_j(x) \leq H (\|x\|^r + 1)$ for all $j=1,\dots,M$ (for some $H$ independent of $x$) there exists $t >0$ such that  $E(\|X(t)\|^r)=\infty$. (As always we assume deterministic initial condition $x_0$.)
\end{theorem}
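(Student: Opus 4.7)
The plan is to assume for contradiction that $E(\|X(t)\|^r)<\infty$ for all $t\geq 0$ and to show that the non-negative scalar $\psi(t) = E(\gamma^T X(t))$ (non-negative because $\gamma>0$ and $X(t)\in\posint^N$) then satisfies a superlinear differential inequality $\psi'(t)\geq C'\psi(t)^\alpha$ with $\alpha>1$, which forces $\psi$ to diverge in finite time.

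First I would derive a closed integral equation for $\psi$. The hypothesis $a_j(x)\leq H(\|x\|^r+1)$ places every $a_j$ in $\scP_r$, so Lemma \ref{lem_mom_smooth} with $s=r$ gives Dynkin's formula \eqref{eq_Dynkin_int} for every bounded $h$, in particular for the truncation $f_K(x) = (\gamma^T x)\wedge K \in \scP_0$. A short case analysis comparing $\gamma^T x$ and $\gamma^T(x+\nu_j)$ with the threshold $K$ shows $|f_K(x+\nu_j)-f_K(x)|\leq |\gamma^T\nu_j|$ uniformly in $K$, so $|(\sA f_K)(x)|\leq \sum_j|\gamma^T\nu_j|a_j(x) \in \scP_r$, while $(\sA f_K)(x)\to \gamma^T F(x)$ pointwise as $K\uparrow\infty$. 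Since $E(\|X(\cdot)\|^r)$ is continuous on $[0,t]$ by the first part of Lemma \ref{lem_mom_smooth}, the $\scP_r$ bound above has continuous (hence integrable) expectation on $[0,t]$, and dominated convergence gives
\[
\psi(t) = \gamma^T x_0 + \int_0^t E\bigl(\gamma^T F(X(s))\bigr)\, ds,
\]
so $\psi \in C^1$ with $\psi'(t) = E(\gamma^T F(X(t)))$. Combining $\gamma^T F(x)\geq C\|x\|^\alpha$ with Jensen's inequality for the convex function $u\mapsto u^\alpha$ applied to the non-negative variable $\gamma^T X(t)$, and using norm equivalence in the form $\gamma^T x \leq c_2\|x\|$, yields
\[
\psi'(t) \geq C\, E(\|X(t)\|^\alpha) \geq Cc_2^{-\alpha}\bigl(E(\gamma^T X(t))\bigr)^\alpha = C'\,\psi(t)^\alpha
\]
with $C'>0$; note that the two inequalities $\gamma^T F(x)\geq C\|x\|^\alpha$ and $\gamma^T F(x)\leq \sum_j|\gamma^T\nu_j|a_j(x) \in \scP_r$ together force $\alpha\leq r$, so $E(\|X(t)\|^\alpha)$ is indeed finite under the contradiction assumption.

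It remains to produce a strictly positive value of $\psi$ and to blow it up. If $x_0\neq 0$ then $\psi(0)=\gamma^T x_0>0$; if $x_0=0$, the assumption that $0$ is not absorbing yields some $j$ with $a_j(0)>0$, and properness forces $\nu_j\in\posint^N\setminus\{0\}$, so $\text{Prob}(X(t)=\nu_j)>0$ and $\psi(t_0)>0$ for some small $t_0>0$. Comparison with the scalar ODE $y'=C'y^\alpha$, $y(t_0)=\psi(t_0)$, whose solution explodes at time $t_0+\psi(t_0)^{1-\alpha}/(C'(\alpha-1))$, then contradicts the finiteness of $\psi$. The main obstacle is the first step: Lemma \ref{lem_mom_smooth} does not directly cover the linear functional $h(x)=\gamma^T x$ when only $s=r$ is available, so one must go through the bounded truncation $f_K$ and carefully pass to the limit using the $\scP_r$ bound on the truncated generator; once this integrated Dynkin identity is in hand, everything that follows is a routine scalar ODE comparison.
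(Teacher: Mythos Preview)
Your proposal is correct and follows the same overall arc as the paper's proof: assume finiteness of the $r$th moment, derive the integrated Dynkin identity for the linear functional $\gamma^T x$, apply the hypothesis and Jensen to get the superlinear inequality $\psi'\geq C'\psi^\alpha$, and conclude blow-up.

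The one technical difference worth noting is how the limiting Dynkin identity is obtained. The paper first uses the finiteness of $\int_0^t E(a_0(X(s)))\,ds$ to deduce non-explosivity, then applies Dynkin's formula to the stopped process $X(\cdot\wedge\tau_K)$ with a truncation $f^K(x)=\|x\|\wedge M_K$ chosen so that $(\sA f^K)(x)=\gamma^T F(x)$ exactly on $\{\|x\|\leq K\}$, and finally passes to the limit via dominated convergence using $\tau_K\uparrow\infty$. You bypass the stopping-time machinery entirely: you work with the unstopped process and the simpler truncation $f_K(x)=(\gamma^T x)\wedge K$, observe the uniform-in-$K$ bound $|(\sA f_K)(x)|\leq\sum_j|\gamma^T\nu_j|a_j(x)\in\scP_r$, and invoke dominated convergence directly on the time integral. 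Your route is a bit more economical---no separate non-explosivity step---while the paper's route makes the generator identity exact rather than a pointwise limit before taking $K\to\infty$. Both are standard and sound; the substance is identical.
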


\begin{proof}
We shall prove by contradiction. 
Suppose $r \in \nat$ satisfies $a_j(x) \leq H (\|x\|^r + 1)$ for all $j=1,\dots,M$ (for some $H$ independent of $x$) and assume that 
for all $t \geq 0$ it holds that $E(\|X(t)\|^r) < \infty$. Let $a_0 = \sum_{j=1}^M a_j$. Then $a_0 \in \scP_r$, $E(a_0(X(t))) < \infty$ for $t \geq 0$, and using Lemma \ref{lem_mom_smooth} it follows that $E(a_0(X(t)))$ is continuous in $t$. Thus we also have that 
\[
\int_0^t E( a_0(X(s))) ds < \infty.
\]
If the number of events of type $j$ occurring during $(0,t]$ is denoted by $R_j(t)$ then 
\[
E(R_j(t)) = \int_0^t E( a_j(X(s))) ds < \infty,
\] 
and hence $R_j(t) < \infty$ with probability $1$. In other words the process is non-explosive. 

First choose a norm such that $\|x\|=\gamma^T |x|$ ($|x|=(|x_1|,\dots,|x_N|)$.
By equivalence of norms, the inequality $\gamma^T F(x) \geq C \|x\|^\alpha$ still holds with possibly a different $C$. 
For each $K>0$  define $M_K>0$ as follows:
\[
M_K = \sup\{ \|x+\nu_j\|, \|x\| \, | \, j=1,\dots,M, \, x \in \posint^n, \, \|x\| \leq K \}.
\]
Clearly as $K \to \infty$, we have that $M_K \to \infty$. 

For each $K>0$ let us introduce the function $f^K: \posint^N \to \real$ by 
$f^K(x) = \|x\| \wedge M_K$ and as $K \to \infty$, we have that $f^K(x) \to \|x\|$. 
We also have that
\[
(\sA f^K)(x) = \sum_{j=1}^M \gamma^T \nu_j a_j(x) = \gamma^T F(x), 
\]
for all $x \in \posint^N$ that satisfy $\|x\| \leq K$.  

We define the stopping times $\tau_K$ by
\[
\tau_K = \inf \{ t \, | \, \|X(t) \| > K \}.
\]
By the non-explosivity, we have that $\tau_K \to \infty$ with probability $1$ as $K \to \infty$. 

We observe that \eqref{eq_Dynkin_int} holds if $t$ is replaced by a bounded stopping time \cite{EK-book}. Since $t \wedge \tau_K$ is bounded above by $t$ we have that 
\[
E(f^K(X(t \wedge \tau_K))) = f^K(x_0) + \sum_{j=1}^M \gamma^T \nu_j E\left( \int_0^{t \wedge \tau_K} a_j(X(s)) ds \right).
\]
For sufficiently large $\nu_0>0$, we can bound 
\[
\|\sum_{j=1}^M \gamma^T \nu_j a_j(x) \| \leq \nu_0 a_0(x),
\]
for all $x \in \posint^N$. Since as $K \to \infty$, we have that $X(t \wedge \tau_K) \to X(t)$ with probability $1$, by dominated convergence theorem, we may 
take limit both sides of above integral equation to obtain that
\[
E(\|X(t)\|) = f(x_0) + \sum_{j=1}^M \gamma^T \nu_j E\left( \int_0^{t} a_j(X(s)) ds \right).
\]
Hence we have that
\[
\frac{d}{dt} E(\|X(t)\|) = E(\gamma^T F(X(t))) \geq C E(\|X(t)\|^\alpha) \geq C (E(\|X(t)\|))^\alpha,
\]
(the last step uses Jensen's inequality, see \cite{WilliamsBook} for instance).  
Let $\phi(t)=E(\|X(t)\|)$. 
Thus $\phi$ satisfies 
\[
\frac{d}{dt} \phi(t) \geq C (\phi(t))^\alpha, \quad t \geq 0. 
\]
Under the assumption on the initial state, for every $t>0$, $\phi(t)>0$. 
Fix $t_1>0$ to obtain $\phi(t) \geq \phi(t_1) >0$ for $t \geq t_1$. 
We obtain for $t \geq t_1$, 
\[
\frac{d}{dt} \left(\frac{-(\alpha-1)}{(\phi(t))^{\alpha-1}}\right) = \frac{\frac{d}{dt} \phi(t)}{(\phi(t))^\alpha} \geq C
\]
and after some manipulations we obtain
\[
(\phi(t))^{\alpha-1} \geq \frac{\alpha-1}{\frac{\alpha-1}{(\phi(t_1))^{\alpha-1}}-Ct},
\]
showing that $\phi(t)=E(\|X(t)\|)$ is not finite for all $t >0$  
reaching a contradiction.
\end{proof}

We like to remark that the condition stated in Theorem \ref{thm3} has 
not been shown to imply the nonexistence of the first order moment $E(\|X(t)\|)$ for all time $t \geq 0$. 

%A careful examination of the proof shows that 
%the condition implies that $E(\|X(t)\|^r)$ may not exist for all $t \geq 0$ where $r>1$ is such that it provides a growth bound $a_0(x) \leq C (\|x\|^r + 1)$.  

{\bf Example 5} Let us consider a modified version of Example 2 where $\nu_1=(2,-1)^T$, $\nu_2=(-1,1)^T$ and $a_1(x)=x_2^2$ as before, but we set $a_2(x) = x_1^2$. The quadratic form for $a_2$ makes reaction $2$ also to be critical. 
In order to satisfy the sufficient condition of Theorem \ref{thm1_mom_bnd} 
we must find $\gamma \in \posint^2$ with $\gamma>0$ and $\gamma \nu \leq 0$. 
This requires the conditions 
\[
\gamma_1>0, \; \; \gamma_2>0, \; \; 2 \gamma_1 \geq \gamma_2, \; \; \gamma_1 \leq \gamma_2,
\]  
which cannot be met. In this example the sufficient conditions of Theorem \ref{thm2} also lead to the same conditions on $\gamma$ which cannot be met. 
On the other hand if we choose $\gamma = (2,3)^T$ then we obtain that 
\[
\gamma^T F(x) = a_1(x) + a_2(x) = x_2^2 + x_1^2 = \|x\|^2.
\]
Thus the condition of Theorem \ref{thm3} is satisfied (assuming initial condition is not $0$) and since propensities are quadratic we can conclude that $E(\|X(t)\|^2)=\infty$ for some $t>0$ .

\bibliography{kinetics}
\bibliographystyle{plain}

\section*{Appendix}
We summarize some basics from convex analysis. A set $C \subset \real^n$ is said to be {\em convex} if for each $x,y \in C$ and each $\alpha \in (0,1)$ it holds
 that $\alpha x + (1-\alpha) y \in C$. A set $K \subset \real^n$ is said to be a {\em cone} with vertex $x \in \real^n$ is for each $y \in K$ and each $\lambda \geq 0$ it holds that $x + \lambda (y-x) \in K$. A {\em convex cone} is simply a set that is both convex and a cone. A cone is said to be {\em finitely generate} or {\em polyhedral} if there exists a finite set of vectors $v_1,\dots,v_m \in \real^n$ 
such that $y \in K$ if and only if there exist $\lambda_1,\lambda_2,\dots,\lambda_n$ all greater than or equal to zero such that 
\[
y = x + \sum_{j=1}^n \lambda_j v_j,
\] 
where $x$ is the vertex. 

It is easy to show that polyhedral cones are closed and convex sets, i.e.\ closed convex cones. It may be shown that if $C,K \subset \real^n$ are polyhedral cones with vertex $0$ then so are $C \cap K$ and $C + K$ where the sum of two sets is defined by
\[
C+K = \{ c + k \, | \, c \in C, \, k \in K\}.
\]
The {\em polar} of a cone $K$ with vertex $0$ is the cone $K^o$ (with vertex $0$) defined by
\[
K^o = \{ y \in \real^n \, | \, \forall x \in K, \; y^T x \leq 0 \}.
\] 

\begin{lemma} \label{lem_polar_cap} Suppose $C$ and $K$ are convex cones with vertex $0$ in $\real^n$. Then 
\[
(C \cap K)^o = \text{cl}(C^o+K^o).
\]
Here $\text{cl}$ refers to the closure of a set.
\end{lemma}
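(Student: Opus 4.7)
The plan is to prove the two inclusions separately, under the understanding that in every use of the lemma in this paper $C$ and $K$ are in fact closed convex cones (indeed polyhedral: $C_0$ is generated by the columns of $\nu$ and $\real_+^N$ is an orthant), which is what allows the bipolar theorem to be invoked cleanly.

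The easy inclusion, $\text{cl}(C^o + K^o) \subseteq (C \cap K)^o$, is immediate from the definitions: if $y = c + k$ with $c \in C^o$ and $k \in K^o$, then for every $x \in C \cap K$ one has $y^T x = c^T x + k^T x \leq 0$, so $C^o + K^o \subseteq (C \cap K)^o$. The polar of any set is closed (being an intersection of closed half-spaces through the origin), so the inclusion persists after taking closure on the left.

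For the reverse inclusion, I would set $D = \text{cl}(C^o + K^o)$, which is a closed convex cone with vertex $0$, and compute $D^o$ using two standard facts: (i) the polar is closure-invariant, $A^o = (\text{cl}\, A)^o$, and (ii) for cones $A,B$ containing $0$ one has $(A+B)^o = A^o \cap B^o$, where the inclusion $\subseteq$ is obtained by testing against elements of the form $(a,0)$ and $(0,b)$ and $\supseteq$ is linearity. Together these give $D^o = (C^o + K^o)^o = C^{oo} \cap K^{oo}$, and the bipolar theorem applied to the closed convex cones $C$ and $K$ collapses this to $D^o = C \cap K$. Since $D$ is itself a closed convex cone with vertex $0$, one more application of the bipolar theorem yields $D = D^{oo} = (C \cap K)^o$, completing the proof.

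The only non-trivial ingredient is the bipolar theorem $A^{oo} = A$ for closed convex cones with vertex $0$, a classical consequence of the Hahn--Banach separation theorem; equivalently, the reverse inclusion can be obtained directly by strict separation, producing for any $y \notin D$ a vector $x \in D^o = C \cap K$ with $x^T y > 0$, hence $y \notin (C \cap K)^o$. The closure operation in the statement is genuinely needed because $C^o + K^o$ need not be closed in general, though it is automatically closed (in fact polyhedral) whenever $C$ and $K$ are polyhedral, as they are in every application of the lemma in this paper.
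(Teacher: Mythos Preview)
Your proof is correct. The paper does not actually prove this lemma: it simply cites Exercise~2.12 of Florenzano--Le~Van, so your self-contained argument via the bipolar theorem supplies what the paper omits.

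You are also right to flag the closedness hypothesis. In fact the lemma as literally stated in the paper (convex cones with vertex $0$, not assumed closed) is false: take $C = \{(x,y) \in \real^2 : x > 0\} \cup \{0\}$ and $K = \{(x,y) \in \real^2 : x < 0\} \cup \{0\}$; then $C \cap K = \{0\}$ so $(C \cap K)^o = \real^2$, whereas $C^o = \{(a,0) : a \leq 0\}$, $K^o = \{(a,0) : a \geq 0\}$, and $\text{cl}(C^o + K^o)$ is just the $x$-axis. Your observation that in every application in the paper the two cones are polyhedral (hence closed) is precisely what is needed, and in that polyhedral setting the closure on the right-hand side is moreover redundant, exactly as you note.
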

\begin{proof}
See \cite{Florenzano-LeVan-Book} Exercise 2.12.
\end{proof}

\end{document}